\newtheorem{thm}{Theorem}[section]
\newtheorem{cor}[thm]{Corollary}
\newtheorem{lem}[thm]{Lemma}
\newtheorem{prop}[thm]{Proposition}
\newtheorem{defn}[thm]{Definition}
\newtheorem{rem}[thm]{Remark}
\def\det{\mathrm{det}}
\def\cF{\mathcal{F}}
\def\rG{\mathrm{G}}
\def\bG{\textbf{G}}
\def\GL{\mathrm{GL}}
\def\Hom{\mathrm{Hom}}
\def\ind{\mathrm{ind}}
\def\cK{\mathcal{K}}
\def\rL{\mathrm{L}}
\def\bL{\textbf{L}}
\def\rM{\mathrm{M}}
\def\rN{\mathrm{N}}
\def\rO{\mathrm{O}}
\def\rP{\mathrm{P}}
\def\res{\mathrm{res}}
\def\SL{\mathrm{SL}}
\def\rT{\mathrm{T}}
\def\bT{\mathbf{T}}
\def\rU{\mathrm{U}}
\def\rV{\mathrm{V}}
\def\rY{\mathrm{Y}}
\def\cY{\mathcal{Y}}
\def\rZ{\mathrm{Z}}
\begin{document}





\hypersetup{							
pdfauthor = {Peiyi Cui},			
pdftitle = {representation mod l of SL_n},			
pdfkeywords = {Tag1, Tag2, Tag3, ...},	
}					
\title{Supercuspidal support of irreducible modulo $\ell$-representations of $\mathrm{SL_n}(F)$}
\author{Peiyi Cui \footnote{peiyi.cuimath@gmail.com, Institut de recherche math\'ematique de Rennes, Universit\'e de Rennes 1 Beaulieu, 35042 Rennes CEDEX, France; Current address: Department of Mathematics, Oskar-Morgenstern-Platz 1, 1090 Vienna, Austria.}}

\date{\vspace{-2ex}}
\maketitle

\begin{abstract}
Let $k$ be an algebraically closed field with characteristic $\ell\neq p$. We show that the supercuspidal support of irreducible smooth $k$-representations of Levi subgroups $\rM'$ of $\mathrm{SL}_n(F)$ is unique up to $\rM'$-conjugation, where $F$ is either a finite field of characteristic $p$ or a non-archimedean locally compact field of residual characteristic $p$.
\end{abstract}

\tableofcontents

\section{Introduction}
Let $F$ be a non-archimedean locally compact field with residual characteristic $p$, and $k$ be an algebraically closed field with characteristic $\ell\neq p$. Let $\bG$ be a connected reductive group defined over $F$ or a finite field $\mathbb{F}_q$, where $q$ is a power of $p$. Denote by $\rG$ the group of $F$ (resp. $\mathbb{F}_q$) rational points $\bG(F)$ (resp. $\rG(\mathbb{F}_q)$). 

The supercuspidal support of an irreducible smooth $k$-representation $\pi$ of $\rG$ is important during the study of the theory of representations of $\rG$. When $\ell$ is equal to $0$, supercuspidal representations are all cuspidal, and there is a quick proof that the cuspidal support of $\pi$ is unique up to $\rG$-conjugation. When $\ell$ is positive, an example of cuspidal but not supercuspidal $k$-representation has been found in \cite{V1} when $\bG=\mathrm{GL}_2$. The cuspidal support of an irreducible $k$-representation of $\rG$ is always unique up to $\rG$-conjugation, while the supercuspidal support is not, and an example of non-uniqueness has been found when $\bG=\mathrm{Sp}_8$ in \cite{Da}. However, the uniqueness of supercuspidal support is true when $\bG=\mathrm{GL}_n$ and a proof was given in \cite{V2}. In this article, the same result is proved when $\bG=\mathrm{SL}_n$ and its Levi subgroups. The uniqueness of supercuspidal support is the base stone of the Bernstein decomposition of the category $\mathrm{Rep}_k(\mathrm{GL}_n(F))$ of smooth $k$-representations of $\mathrm{GL}_n(F)$ in \cite{Helm}. Since we obtain the same result for $\mathrm{SL}_n(F)$, it shows a substantial possibility that the category $\mathrm{Rep}_k(\mathrm{SL}_n(F))$ can be decomposed relative to inertially equivalent classes of supercuspidal pairs of $\mathrm{SL}_n(F)$ (see Definition \ref{definition 000} for supercuspidal pair and two supercuspidal pairs are said to be inertially equivalent if they are $\mathrm{SL}_n(F)$-conjugate to each other up to an unramified character). 

This article is the first step of generalising the result in \cite{Helm} to $\mathrm{SL}_n(F)$. Let $W(k)$ be the ring of Witt vectors of $k$. In \cite{Helm}, Helm gave a Bernstein decomposition of $\mathrm{Rep}_{W(k)}(\mathrm{GL}_n(F))$, from which one can deduce the Bernstein decomposition of $\mathrm{Rep}_k(\mathrm{GL}_n(F))$. It is worth noting that firstly the coefficient $W(k)$ is essentially needed for his later work with Emerton of the local Langlands correspondence for $\mathrm{GL}_n$ in famillies, secondly his proof relies on a family of injective objects in $\mathrm{Rep}_{W(k)}(\mathrm{GL}_n(F))$, which is constructed from the $W(k)$-projective covers of cuspidal $k$-representations of $\mathrm{GL}_n(\mathbb{F}_q)$. In this article, we consider $k$-representations of Levi subgroups $\rM'$ of $\mathrm{SL}_n(\mathbb{F}_q)$ in Section \ref{chapter 3}, where we also study $W(k)[\rM']$-modules and the $W(k)$-projective covers of cuspidal $k$-representations. Since the fractional field $\mathcal{K}$ of $W(k)$ must not be sufficient large for finite group $\mathrm{GL}_n(\mathbb{F}_q)$, we need more discussion about this coefficient here. In Section \ref{section 4}, we consider $k$-representations of Levi subgroups of $p$-adic groups $\mathrm{SL}_n(F)$.

To be more precisely, in Section \ref{chapter 3}, for an irreducible cuspidal $k$-representation $\nu$ of a Levi subgroup $\rM'$, the $W(k)[\rM']$-projective cover $\mathcal{P}_{\nu}$ of $\nu$ can be constructed from Gelfand-Graev $W(k)$-lattice. A computation of $r_{\rL'}^{\rM'}\mathcal{P}_{\nu}$ gives the uniqueness of supercuspidal support of $\nu$, where $\rL'$ denotes a Levi subgroup of $\rM'$ and $r_{\rL'}^{\rM'}$ denotes the normalised parabolic restriction relative to $\rL'$.

In Section \ref{section 4}, a basic fact is that for an irreducible cuspidal $k$-representation $\pi'$ of $\rM'$, there is an irreducible cuspidal $k$-representation $\pi$ of $\rM$ such that $\pi\vert_{\rM'}$ contains $\pi'$ as a sub-representation, where $\rM$ is a Levi subgroup of $\rG=\mathrm{GL}_n(F)$. Let $(\rL,\tau)$ belongs to the supercuspidal support of $\pi$, and $\tau\vert_{\rL'=\rL\cap\rG'}\cong\oplus_{i\in I}\tau_i$ where $I$ is finite. We first prove that the supercuspidal support of $\pi'$ is contained in $\cup_{i\in I}(\rL',\tau_i)_{\rM'}$, where $(\rL',\tau_i)_{\rM'}$ denotes the $\rM'$-conjugacy class of $(\rL',\tau_i)$. Then we generalise the operator of derivative defined in \cite{BeZe} for $\mathrm{GL}_n$ to $\rM'$. Since $\rM'$ can not be written as a direct product of special linear groups in lower rank, the author can not find a way to avoid the complication of notations in Section \ref{subsubsection 5.1.1}. At the end, we deduce that there exists one unique $i_0\in I$ such that $(\rL',\tau_{i_0})$ belongs to the supercupidal support of $\pi'$ from the unicity of Whittaker model of $\pi$.

This is a part of the thesis of the author. She would like to thank Anne-Marie Aubert for her guidance and patient encouragement, and thank Vincent S\'echerre for the careful reading and helpful comments about the content as well as the writing of early version of this article.

\section{Cuspidal and supercuspidal representations}
\paragraph{Basic notations}
Let $F$ be a non-archimedean locally compact field with residual characteristic $p$, and $k$ be an algebraically closed field with characteristic $\ell\neq p$. Let $\bG$ be a connected reductive group defined over $F$ or $\mathbb{F}_q$, where $q$ is a power of $p$, and $\rG$ be the group of $F$ (resp. $\mathbb{F}_q$) rational points of $\bG$. In this article, a $k$-representation of $\rG$ is always assumed to be smooth.

Fix a Borel subgroup of $\rG$. Let $\rM$ be a standard Levi subgroup of $\rG$. Denote by $i_{\rM}^{\rG}$ and $r_{\rM}^{\rG}$ the normalised parabolic induction and normalised parabolic restriction. Let $K$ be a closed subgroup of $\rG$. Denote by $\ind_{K}^{\rG}$ the compact induction from $K$ to $\rG$, and $\res_{K}^{\rG}$ the restriction from $\rG$ to $K$.

\begin{defn}
\label{definition 000}
Let $\pi$ be an irreducible $k$-representation of $\rG$, we say 
\begin{itemize}
\item $\pi$ is cuspidal, if for any proper Levi subgroup $\rM$ and irreducible $k$-representation $\sigma$ of $\rM$, $\pi$ does not appear as a sub nor a quotient-representation of $i_{\rM}^{\rG}\sigma$;
\item $\pi$ is supercuspidal, if for any proper Levi subgroup $\rM$ and irreducible $k$-representation $\sigma$ of $\rM$, $\pi$ does not appear as a subquotient of $i_{\rM}^{\rG}\sigma$.
\end{itemize}
We say a pair $(\rM,\sigma)$ consisting with a Levi subgroup $\rM$ and an irreducible $k$-representation $\sigma$ is a cuspidal (resp. supercuspidal) pair, if $\sigma$ is cuspidal (resp. supercuspidal). We say
\begin{itemize}
\item a cuspidal pair $(\rM,\sigma)$ belongs to the cuspidal support of $\pi$, if $\pi$ is a sub or a quotient-representation of $i_{\rM}^{\rG}\pi$.
\item a supercuspidal pair $(\rM,\sigma)$ belongs to the supercuspidal support of $\pi$, if $\pi$ is a sub-quotient of $i_{\rM}^{\rG}\sigma$.
\end{itemize}
\end{defn}

\begin{rem}
In the above definition, $\pi$ being cuspidal is equivalent with $r_{\rM}^{\rG}\pi$ being zero for any proper Levi $\rM$ of $\rG$. 
\end{rem}

\paragraph{Reduction to cuspidal cases}
Let $\pi$ be an irreducible $k$-representation of $\rG$. The cuspidal support of $\pi$ is unique up to $\rG$-conjugation as proved in \cite{V1}. To prove the uniqueness of supercuspidal support of $\pi$, it is enough to prove the same result for each irreducible cuspidal $k$-representations of Levi subgroups of $\rG$. In fact, let $(\rM,\sigma)$ be a cuspidal pair inside the cuspidal support of $\pi$, and $(\rL,\tau)$ be a supercuspidal pair inside the supercuspidal support of $\pi$. Assume $\rM$ and $\rL$ are standard. We deduce that $\sigma$ is a sub-quotient of $r_{\rM}^{\rG}i_{\rL}^{\rG}\tau$, to which apply the filtration given in \cite[\S II,2.18]{V1}, then we obtain that up to a conjugation of $w$, which is an element of the Weyl group of $\rG$, $(w(\rL),w(\tau))$ belongs to the supercuspidal support of $\sigma$.

\section{$k$-representations of finite groups $\mathrm{SL}_n(F)$}
\label{chapter 3}

In this section, let $\textbf{G}'=\mathrm{SL}_n$ and $\bG=\mathrm{GL}_n$ be defined over $\mathrm{F}_q$, where $q$ is a power of a prime number $p$. Denote by $\rG'=\textbf{G}'(\mathrm{F}_q)$ and $\rG=\bG(\mathrm{F}_q)$. Recall that $k$ is an algebraically closed field with characteristic $\ell\neq p$. Let $W(k)$ be the ring of Witt vectors of $k$ and $\mathcal{K}$ the fractional field of $W(k)$, and $\overline{\mathcal{K}}$ an algebraic closure of $\mathcal{K}$. We have two main purposes in this section, one is to prove Theorem \ref{theorem 1}. The other one is to construct the $W(k)[\rM']$-projective cover of an irreducible cuspidal $k$-representation of $\rM'$, where $\rM'$ denotes a Levi subgroup of $\rG'$.

Notice that the center of $\bG'$ is disconnected but the center of $\bG$ is connected, so we follow the method of \cite{DeLu} (page $132$), which is also applied in \cite{Bonn}: consider the regular inclusion $i: \bG' \rightarrow \bG$, then we want to use functor $\mathrm{Res}_{\rG'}^{\rG}$ to deduce properties from $\rG$-representations to $\rG'$-representations. 

\subsection{Projective modules}
\paragraph{Regular inclusion $i$}\
We summarize the context we will need in Section 2 of [Bon]:

Let $\cF$ be the Frobenius morphism of the Galois group $\mathrm{Gal}(\overline{\mathbb{F}}_q\slash \mathbb{F}_q)$, where $\overline{\mathbb{F}}_q$ is an algebraic closure of $\mathbb{F}_q$. $\cF$ induces an isogeny of $\bG$, which we also denote by $\cF$. In particular, the invariant group $\bG^{\cF}=\rG$. The canonical inclusion $i$ from $\bG'$ to $\bG$ commutes with $\cF$ and maps $\cF$-stable maximal torus to $\cF$-stable maximal torus. If we fix one $\cF$-stable maximal torus $\bT$ of $\bG$ and denote by $\bT'= i^{-1}(\bT)$, then $i$ induces a bijection between the root systems of $\bG$ and $\bG'$ relative to $\bT$ and $\bT'$. Furthermore, $i$ gives a bijection between standard $\cF$-stable parabolic subgroups of $\bG$ and $\bG'$ with inverse $\cdot\cap \bG'$, which respects subsets of simple roots contained by parabolic subgroups. Besides, restricting $i$ to a $\cF$-stable Levi subgroup $\bL$ of a $\cF$-stable parabolic subgroup of $\bG$ is the canonical inclusion from $\bL'$ to $\bL$. 

From now on, we fix a $\cF$-stable maximal torus $\bT_0$ of $\bG$, and fix $\bT'_0=i(\bT_0)$ of $\bG'$ as well. For any $\cF$-stable standard Levi subgroup $\bL$, we always denote by $\bL'=i(\bL)$, and denote by $\rL$ and $\rL'$ the corresponding split Levi subgroups $\bL^{\cF}$ and $\bL'^{\cF}$ respectively.

Now we consider the dual groups. Let $(\bG^{\ast}, \bT_0^{\ast}, \cF^{\ast} )$ and  $(\bG'^{\ast}, \bT'^{\ast}, \cF^{\ast})$ be triples dual to $(\bG, \bT_0, \cF)$ and $(\bG', \bT', \cF)$, where $\bG^{\ast}$ is dual to $\bG$ and $\cF^{\ast}$ is the dual isogeny of $\cF$. We deduce a canonical surjective morphism $i^{\ast}: \bG^{\ast} \rightarrow \bG'^{\ast}$, which commutes with $\cF^{\ast}$ and maps $\bT_0^{\ast}$ to $\bT_0^{'\ast}$. For any $\cF$-stable standard parabolic subgroup $\textbf{P}$ and its $\cF$-stable Levi soubgroup $\bL$, we use $\textbf{P}'$ and $\bL'$ to denote the $F$-stable standard parabolic subgroups $\textbf{P}\cap\bG'$ and Levi subgroups $\bL\cap\bG'$, then we have:
$$i^{\ast} (\bL^{\ast}) = \bL'^{\ast}.$$
After denoting by $\bL'^{\ast^{\cF^{\ast}}}= \rL'^{\ast}$ and by $\bL^{\ast^{\cF^{\ast}}}= \rL^{\ast}$, we have:
$$i^{\ast} (\rL^{\ast}) = \rL'^{\ast}.$$

\paragraph{Lusztig series and $\ell$-blocks}\
From now on, if we consider a semisimple element $\tilde{s} \in \rL^{\ast}$ for any split Levi subgroup $\rL^{\ast}$ of $\rG^{\ast}$, we always denote by $s$ the image $i^{\ast}(\tilde{s})$ and by $[\tilde{s}]$ the $\rL^{\ast}$-conjugacy class of $s$, and a similar definition for $[s]$. We say a semisimple element is $\ell$-regular if $\ell$ does not divide its order. Since the order of $\tilde{s}$ is divisible by the order of $s$, we have that $s$ is $\ell$-regular if $\tilde{s}$ is $\ell$-regular. By the theory of Delign-Lusztig, an irreducible $k$-representation $\pi$ of $\rL$ corresponds to a semisimple conjugacy class $[\tilde{s}]$, where $\tilde{s}$ is $\ell$-regular.

Let $\textbf{G} (\mathbb{F}_q)$ be a finite group of Lie type, where $\textbf{G}$ is a connected reductive group defined over $\mathbb{F}_q$. For any irreducible representation $\chi$ of $\textbf{G} (\mathbb{F}_q)$, let $e_{\chi}$ denote the central idempotent of $\overline{\cK}(\textbf{G}(\mathbb{F}_q))$ associated to $\chi$ (see definition in the beginning of \cite[\S 2]{BrMi}). Fixing a semisimple element $s \in \textbf{G}^{\ast}(\mathbb{F}_q)$, let $\mathcal{E}(\textbf{G} (\mathbb{F}_q),(s))$ be the Lusztig serie of $\textbf{G} (\mathbb{F}_q)$ corresponding to $[s]$. If $s$ is $\ell$-regular, define
$$\mathcal{E}_\ell (\textbf{G} (\mathbb{F}_q),s) := \bigcup_{t \in (C_{\textbf{G}^{\ast}}(s)^{F^{\ast}} )_\ell} \mathcal{E}(\textbf{G}(\mathbb{F}_q), (ts)).$$
Here $(C_{\textbf{G}^{\ast}}(s)^{\cF^{\ast}} )_\ell$ denotes the group consisting with all $\ell$-elements of $C_{\textbf{G}^{\ast}}(s)^{\cF^{\ast}}$, where $C_{\textbf{G}^{\ast}}(s)$ is the centraliser group of $s$ in $\bG$, and $ts$ is semisimple as well. Now define:
$$b_s= \sum_{\chi \in \mathcal{E}_\ell(\textbf{G}(\mathbb{F}_q),s)} e_{\chi},$$ 
which obviously belongs to $\overline{\cK}(\textbf{G}(\mathbb{F}_q))$. 

For the convenience reason, we state a theorem in \cite{BrMi} below.
\begin{thm}[Brou$\mathrm{\acute{e}}$, Michel]
\label{theorem Mi,Br}
Let $s \in \rG^{\ast}$ be a semisimple $\ell$-regular element, and $\mathcal{L}'$ be the set of prime numbers except $\ell$. Define $\overline{\mathbb{Z}}_\ell = \overline{\mathbb{Z}}[1/r]_{r\in \mathcal{L}'}$, where $\overline{\mathbb{Z}}$ denotes the ring of algebraic integers, then $b_s \in \overline{\mathbb{Z}}_\ell [\rG]$.  
\end{thm}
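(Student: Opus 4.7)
The plan is to prove the stronger statement that $b_s$ is a sum of $\ell$-block central idempotents of $\rG$; since every $\ell$-block idempotent automatically has coefficients in $\overline{\bZ}_\ell$, this would imply $b_s\in \overline{\bZ}_\ell[\rG]$. Writing $b_s=\sum_{\chi\in\mathcal{E}_\ell(\rG,s)}e_\chi$, the task reduces to showing that $\mathcal{E}_\ell(\rG,s)$ is a union of $\ell$-blocks of $\rG$.

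The chief tool is Brauer's central-character criterion: two irreducible characters $\chi,\chi'$ of $\rG$ lie in the same $\ell$-block if and only if their central characters $\omega_\chi,\omega_{\chi'}$ agree modulo a maximal ideal of $\overline{\bZ}_\ell$ on every conjugacy class sum, and it suffices to check this on $\ell$-regular classes. So first I would take $\chi\in\mathcal{E}(\rG,(ts))$ with $t$ an $\ell$-element of $C_{\bG^\ast}(s)^{\cF^\ast}$, express $\chi$ via Deligne--Lusztig theory as a $\overline{\cK}$-linear combination of virtual characters $R_\rT^\rG(\theta)$ where $\theta$ corresponds by duality to the class of $ts$ in some torus of $\bG^\ast$, and compute $\omega_\chi$ on $\ell$-regular class sums using the explicit Deligne--Lusztig character formula. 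The crucial point is that since $t$ has $\ell$-power order, the character $\theta_{ts}$ on $\rT^\cF$ is congruent to $\theta_s$ modulo the maximal ideal of $\overline{\bZ}_\ell$ at every $\ell$-regular element, and this congruence propagates through Lusztig induction to give $\omega_\chi\equiv \omega_{\chi_s}\pmod{\ell}$ on $\ell$-regular class sums. Hence all characters of $\mathcal{E}_\ell(\rG,s)$ lie in a single union of $\ell$-blocks; that this union coincides with $\mathcal{E}_\ell(\rG,s)$ (no extra character) is then obtained by showing that two distinct $\ell$-regular classes $[s_1]\neq[s_2]$ in $\rG^\ast$ produce central characters that already differ modulo $\ell$, using that their reductions mod $\ell$ on $\ell$-regular semisimple sources are distinguishable.

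The hard part will be the congruence $R_\rT^\rG(\theta_{ts})(g)\equiv R_\rT^\rG(\theta_s)(g)\pmod{\ell}$ for $g\in \rG$ of order prime to $\ell$: the Deligne--Lusztig formula expresses both sides through values of $\theta$ at semisimple parts of certain twisted conjugates inside $\rT^\cF$, and one must argue that multiplication by an $\ell$-element $t$ on the dual side collapses to the identity after reduction mod $\ell$ at these values, uniformly in the choice of torus. Once this congruence is established, Brauer's criterion combined with the well-known fact that each $\ell$-block meets at least one Lusztig series $\mathcal{E}(\rG,(s'))$ for a uniquely determined $\ell$-regular class $[s']$ will pin down the partition of $\Irr(\rG)$ into $\ell$-blocks as exactly the refinement of $\{\mathcal{E}_\ell(\rG,s)\}_{[s]}$, finishing the proof. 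This is precisely the technical content of \cite{BrMi}.
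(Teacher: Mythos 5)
The paper does not prove this theorem; it quotes it from Brou\'e--Michel \cite{BrMi} and uses it as a black box, so there is no in-paper proof to compare your attempt against.

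That said, your sketch has a genuine gap, and it is precisely the difficulty the Brou\'e--Michel argument is built to avoid. You propose to apply Brauer's central-character criterion to individual $\chi\in\mathcal{E}(\rG,(ts))$, and for this you want to ``express $\chi$ as a $\overline{\cK}$-linear combination of virtual characters $R_\rT^\rG(\theta)$'' and then compute $\omega_\chi$ on $\ell$-regular class sums via the Deligne--Lusztig character formula. But a general irreducible character in a Lusztig series is \emph{not} a linear combination of Deligne--Lusztig virtual characters: only the \emph{uniform} class functions are, and many irreducibles (cuspidal unipotent characters, for instance) are not uniform. So the step ``compute $\omega_\chi$ via the DL formula'' is undefined for a general $\chi$, and your congruence $R_\rT^\rG(\theta_{ts})\equiv R_\rT^\rG(\theta_s)$ on $\ell'$-elements does not on its own control the separate central characters $\omega_\chi$ that Brauer's criterion would require. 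What Brou\'e--Michel actually exploit is that although the individual $\chi$ are not uniform, the aggregate class function $g\mapsto\sum_{\chi\in\mathcal{E}(\rG,(ts))}\chi(1)\overline{\chi(g)}$ for fixed $t$ \emph{is} uniform; hence $b_s$ itself is a uniform function, its coefficients can be written in terms of values $R_\rT^\rG(\theta)(g)$, and the congruence you state is then applied at this level to read off $\ell$-integrality of the coefficients of $b_s$ directly, rather than block membership character by character. One further small point: your opening move of replacing ``$b_s\in\overline{\bZ}_\ell[\rG]$'' by ``$b_s$ is a sum of $\ell$-block central idempotents'' is not a strengthening; for a central idempotent of $\overline{\cK}[\rG]$ the two statements are equivalent, so the reduction does not gain you anything.
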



\begin{rem}
\label{remark 0009}
We view $\overline{\mathbb{Z}}_{\ell}$ as a subring of $\overline{\mathcal{K}}$. Let $K^{unr}$ be an unramified closure in $\overline{\mathcal{K}}$ of $\mathbb{Q}_{\ell}$ and $O^{unr}$ be the ring of integers of $K^{unr}$, in fact we have $b_s\in O^{unr}[\rG]$. By the proof of Theorem 9.12 of \cite{CE}, we know that the support of $b_s$ is contained in $\rG_{\ell'}$, which is the set of elements in $\rG$ whose order is prime to $\ell$. Hence we have that $b_s\in K^{unr}[\rG]\cap\overline{\mathbb{Z}}_{\ell}[\rG]=O^{unr}[\rG]$. In particular,  $O^{unr}\subset W(\overline{\mathbb{F}}_{\ell})\subset W(k)$, since $W(\overline{\mathbb{F}}_{\ell})$ is the completion of the $\ell$-adic topology of an unramified closure of $\mathbb{Q}_{\ell}$. It is worth noting that $K^{unr}$ and $\mathcal{K}$ must not be sufficient large for $\rG$. In particular, there exists cuspidal $\overline{\mathcal{K}}$-representation of $\mathrm{GL}_2(\mathbb{F}_q)$ which is not defined over $\mathcal{K}$.
\end{rem}


\begin{prop}
\label{proposition 2.3}
For any split Levi subgroup $\rL$ (resp. $\rL'$) and any semisimple $\ell$-regular element $\tilde{s} \in \rL^{\ast}$ (resp. $s \in \rL'^{\ast}$), we have: $b_{\tilde{s}} \in O^{unr}[\rL]$ (resp. $b_s \in O^{unr}[\rL']$).
\end{prop}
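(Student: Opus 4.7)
The plan is to reduce this proposition to Theorem \ref{theorem Mi,Br} (the Brou\'e--Michel theorem) applied to the Levi subgroups themselves, rather than to the ambient group $\rG$, and then to upgrade the coefficient ring via the argument already sketched in Remark \ref{remark 0009}.

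First I would observe that any split Levi subgroup $\rL$ of $\rG=\mathrm{GL}_n(\mathbb{F}_q)$ is the group of $\cF$-fixed points of a connected reductive $\mathbb{F}_q$-group $\bL$ (namely a product of $\mathrm{GL}_{n_i}$), and similarly each split Levi $\rL'$ of $\rG'=\mathrm{SL}_n(\mathbb{F}_q)$ is the group of $\cF$-fixed points of a connected reductive $\mathbb{F}_q$-group $\bL'=\bL\cap\bG'$. Both are therefore themselves finite groups of Lie type, so the hypothesis of Theorem \ref{theorem Mi,Br} is met if we take $\bL$ (resp.\ $\bL'$) in the role of $\bG$ and the $\ell$-regular semisimple element $\tilde{s}\in\rL^{\ast}$ (resp.\ $s\in\rL'^{\ast}$) in the role of $s$. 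The theorem then yields
\[
b_{\tilde{s}}\in\overline{\mathbb{Z}}_{\ell}[\rL],\qquad b_s\in\overline{\mathbb{Z}}_{\ell}[\rL'].
\]

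Next I would repeat the refinement argument of Remark \ref{remark 0009} inside the Levi: by \cite[Theorem 9.12]{CE}, applied now to the finite reductive group $\rL$ (resp.\ $\rL'$), the support of the central idempotent $b_{\tilde{s}}$ (resp.\ $b_s$) is contained in the set of $\ell$-regular elements of $\rL$ (resp.\ $\rL'$). Since the value of a character of a finite group on an $\ell$-regular element lies in an unramified extension of $\mathbb{Q}_{\ell}$, this support condition forces $b_{\tilde{s}}$ to lie in $K^{\mathrm{unr}}[\rL]$. Combining with the integrality obtained above,
\[
b_{\tilde{s}}\in K^{\mathrm{unr}}[\rL]\cap\overline{\mathbb{Z}}_{\ell}[\rL]=O^{\mathrm{unr}}[\rL],
\]
and the identical argument for $\bL'$ yields $b_s\in O^{\mathrm{unr}}[\rL']$, completing the proof.

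I do not anticipate a serious obstacle here: the one point worth double-checking is that the Brou\'e--Michel statement invoked from \cite{BrMi} is formulated for an arbitrary connected reductive group over $\mathbb{F}_q$, so the passage from $\rG$ to the smaller groups $\rL$ and $\rL'$ is formal. The only place where the geometry of the inclusion $i:\bG'\hookrightarrow\bG$ might have played a role is the identification $i^{\ast}(\rL^{\ast})=\rL'^{\ast}$, but the proposition is stated independently for $\rL$ and for $\rL'$, so this compatibility is not actually needed for the statement itself—it will matter only when we later compare idempotents across the inclusion.
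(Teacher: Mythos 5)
Your proposal is correct and follows essentially the same route as the paper's own (rather terse) proof: the paper simply invokes ``the analysis above and the definition,'' but what it is really doing is exactly what you have spelled out, namely applying Theorem~\ref{theorem Mi,Br} and the support argument from Remark~\ref{remark 0009} (via \cite[Theorem 9.12]{CE}) directly to the Levi $\rL$ (resp.\ $\rL'$), viewed as a finite reductive group in its own right, and then intersecting $K^{\mathrm{unr}}[\rL]\cap\overline{\mathbb{Z}}_{\ell}[\rL]=O^{\mathrm{unr}}[\rL]$. Your version is in fact clearer than the paper's, as it makes explicit the two inputs (Brou\'e--Michel integrality, and the $\ell$-regular support giving unramified character values) that the paper compresses into one sentence.
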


\begin{proof}
We deduce from the analysis above and the definition that $e_{\chi} \in K[\rL]$. Combining this with Theorem \ref{theorem Mi,Br}, we conclude that $b_{\tilde{s}} \in O^{unr}[\rL]$. The same for $b_s$. 
\end{proof}

\paragraph{Gelfand-Graev lattices and its projective direct summands}\
In this section, we construct the projective cover of an irreducible cuspidal $k$-representation of $\rL'$ by using Gelfand-Graev lattice, and prove that it is a direct summand of the projective cover of an irreducible cuspidal $k$-representaions of $\rL$ after restricted to $\rL'$ (see Proposition \ref{lemma a.1}). 

For a split Levi subgroup $\rL'$ of $\rG'$, fix a rational split Borel subgroup $\mathrm{B}_{\rL'}'$ with unipotent radical $\rU_{\rL'}$. Denote by $\mathrm{O_{U}}(\rL')$ the set of non-degenerate $\overline{\mathcal{K}}$-characters of $\rU_{\rL'}$. Let $\rL$ be the Levi subgroup of $\rG$ such that $\rL\cap\rG'=\rL'$, the group $\rU_{\rL'}=\rU_{\rL}$ is also the unipotent radical of $\rL$. Notice that $\mathrm{O_{U}}(\rL)=\mathrm{O_{U}}(\rL')$ consists only one unique $\rL$-conjugacy class, but multiple $\rL'$-conjugacy classes. 

Let $(K,O,k)$ be a splitting $\ell$-modular system of $\rG$, where $K$ is a finite extension of $\mathcal{K}$ sufficiently large for $\rG$ and $O$ is its ring of integers. For an $\mu \in \rO_{\rU} (\rL')$, it contains an $O [\rU_{\rL'}]$-lattice, and we denote it by $\rO_{\mu}$. Define $\rY_{\rL', \mu} = \ind_{\rU_{\rL'}}^{\rL'} \rO_{\mu}$, the \textbf{Gelfand-Graev lattice} associated to $\mu$. In fact, we have that $\rY_{\rL', \mu}$ is defined up to the $\rT'$-conjugacy class of $\mu$. Take any $\ell$-regular semisimple element $s \in \rL'^{\ast}$, define:
$$\rY_{\rL', \mu, s} = b_s \cdot \rY_{\rL', \mu}.$$
Meanwhile, from the definition we have directly that 
$$\sum_{[s]}b_s=1,$$
where the sum runs over all the $\ell$-regular semisimple $\rL'^{\ast}$-conjugacy class $[s]$. So:
$$\rY_{\rL', \mu} = \sum_{[s]} \rY_{\rL', \mu, s}.$$
Since $\rO_{\mu}$ is projective and the compact induction respects projectivity, we know that $\rY_{\rL', \mu}$ is a projective $O[\rL']$-module. Proposition \ref{proposition 2.3} implies that $\rY_{\rL', \mu, s}$ are  $O[\rL']$-modules, which are direct components of projective $O[\rL']$-module $\rY_{\rL', \mu}$, hence $\rY_{\rL', \mu, s}$ are also projective $O[\rL']$-modules. 

We define 
$$\mathcal{E}_{\ell'}(\rG):=\bigcup_{z \text{ semi-simple, }\ell-\text{regular}}\mathcal{E}(\rG,z)$$

\begin{defn}[Gruber, Hiss]
\label{defn fin aa}
Let $\rG$ be the group of $\mathbb{F}_q$-points of an algebraic group defined over $\mathbb{F}_q$, and $(K,O,k)$ be a splitting $\ell$-modular system. Let $Y$ be an $O[\rG]$-lattice with ordinary character $\psi$. Write $\psi=\psi_{\ell'}+\psi_{\ell}$, such that all constituents of $\psi_{\ell'}$ and non of $\psi_{\ell}$ belong to $\mathcal{E}_{\ell'}(\rG)$. Then there exists a unique pure sublattice $V\leq Y$, such that $Y\slash V$ is an $O[\rG]$-lattice whose character is equal to $\psi_{\ell'}$. The quotient $Y\slash V$ is called the $\ell$-regular quotient of $Y$ and denoted by $\pi_{\ell'}(Y)$.
\end{defn}

\begin{cor}
\label{corollary 3.4}
Let $\rL'$ be a split Levi subgroup of $\rG'$, and $s$ be an $\ell$-regular semisimple element in $\rL'^{\ast}$. For any $\mu \in \rO_{\mathrm{U}}(\rL')$, the module $\rY_{\rL', \mu, s}$ is indecomposable.
\end{cor}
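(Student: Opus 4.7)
The plan is to deduce indecomposability of $\rY_{\rL', \mu, s}$ by combining two facts: that it is projective (already established in the preceding paragraph) and that its reduction modulo $\ell$ has a simple head. A projective $O[\rL']$-module with simple head after reduction mod $\ell$ is necessarily indecomposable, since its decomposition into projective indecomposable summands is in bijection with the simple quotients of $k \otimes_O \rY_{\rL', \mu, s}$. The task therefore reduces to a multiplicity count.

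First, I would verify that the ordinary character $\psi$ of $K \otimes_O \rY_{\rL', \mu, s}$ is multiplicity-free. This follows from the classical multiplicity-one property of Gelfand--Graev representations for split reductive finite groups (unicity of the Whittaker model): the $K[\rL']$-module $K \otimes_O \rY_{\rL', \mu}$ contains each irreducible constituent at most once, and multiplication by the central idempotent $b_s$ preserves this property by restricting to constituents in the Lusztig subset $\mathcal{E}_\ell(\rL', s)$. Applying the $\ell$-regular quotient functor $\pi_{\ell'}$ of Definition \ref{defn fin aa}, one obtains an $O[\rL']$-lattice whose character collects exactly the constituents of $\psi$ lying in $\mathcal{E}(\rL', (s))$.

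Second, I would compare $\rY_{\rL', \mu, s}$ with its $\mathrm{GL}_n$-counterpart $\rY_{\rL, \mu, \tilde{s}}$, where $\rL$ is the Levi of $\rG$ satisfying $\rL \cap \rG' = \rL'$ and $\tilde{s} \in \rL^{\ast}$ is a lift of $s$ under $i^{\ast}$. Standard results for $\mathrm{GL}_n$ provide that $\rY_{\rL, \mu, \tilde{s}}$ is the projective cover of a unique irreducible cuspidal $k[\rL]$-module. By restricting this projective cover to $\rL'$ and exploiting Clifford theory for the abelian quotient $\rL / \rL' \cdot \Cent(\rL)$, I would identify $\rY_{\rL', \mu, s}$ with the block component picked out by the $\rL'$-orbit of $\mu$, thereby exhibiting a single simple head.

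The main obstacle is the orbit bookkeeping: the unique $\rL$-orbit of non-degenerate characters on $\rU_{\rL} = \rU_{\rL'}$ breaks into several $\rL'$-orbits, so $\mathrm{Res}^{\rL}_{\rL'} \rY_{\rL, \mu, \tilde{s}}$ decomposes across multiple Gelfand--Graev lattices $\rY_{\rL', \mu^t, s}$ indexed by a transversal $t$ of $\rL' \cdot \Cent(\rL)$ in $\rL$. I would handle this via a Mackey-type argument showing that the restriction splits exactly as the direct sum of these $\rY_{\rL', \mu^t, s}$, and that the induced action of the transversal permutes the simple constituents of the head of $k \otimes_O \rY_{\rL, \mu, \tilde{s}}$ transitively; each $\rL'$-summand then inherits a unique simple head, yielding the indecomposability of $\rY_{\rL', \mu, s}$.
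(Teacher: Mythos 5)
Your proposal correctly identifies that projectivity plus a multiplicity statement about Gelfand--Graev lattices should give indecomposability, but it never actually closes the argument, and in two places it substitutes a weaker or circular step for the key fact the paper uses.

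First, the criterion you invoke (a projective $O[\rL']$-lattice is indecomposable once $k\otimes_O\rY_{\rL',\mu,s}$ has simple head) is not the one the paper uses, and you never verify it. The paper instead applies the Gruber--Hiss criterion (\S4.1 of [GrHi], or Lemma 5.11 of [Geck]): a projective $O[\rL']$-lattice is indecomposable if and only if its $\ell$-regular quotient $\pi_{\ell'}(\cdot)$ is indecomposable. That reduces the problem to a statement over $K$, not over $k$, and is what makes the proof short. You drift toward this route in your first paragraph, but you stop at ``the character of $\pi_{\ell'}(\rY_{\rL',\mu,s})$ collects the constituents lying in $\mathcal{E}(\rL',(s))$'' without establishing the crucial point: that there is exactly \emph{one} such constituent of $K\otimes\rY_{\rL',\mu}$ in $\mathcal{E}(\rL',(s))$. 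Multiplicity-freeness of the Gelfand--Graev representation only says each constituent appears at most once, which is weaker; for a group with disconnected center the rational series $\mathcal{E}(\rL',(s))$ generally contains several regular characters, and the nontrivial input (Geck \S5.13, going back to Digne--Lehrer--Michel/Bonnaf\'e for the disconnected-center case) is that a \emph{fixed} Gelfand--Graev representation $\ind_{\rU_{\rL'}}^{\rL'}\mu$ sees exactly one of them. Once you have that, $K\otimes\pi_{\ell'}(\rY_{\rL',\mu,s})$ is irreducible, $\pi_{\ell'}(\rY_{\rL',\mu,s})$ is torsion-free, so it is indecomposable and the Gruber--Hiss criterion finishes the proof.

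Second, the ``Clifford theory from $\rL$'' route in your last two paragraphs is essentially circular at the level of this corollary. To argue that $\mathrm{res}^{\rL}_{\rL'}\rY_{\rL,\tilde s}$ splits as $\bigoplus_t\rY_{\rL',\mu^t,s}$ \emph{and} that this coincides with the decomposition into projective indecomposables, you would already need to know each $\rY_{\rL',\mu^t,s}$ is indecomposable --- which is the statement being proved. The paper does prove the restriction-compatibility you describe (Propositions~\ref{proposition 3}, \ref{lemma a.1}, \ref{prop a}), but only \emph{after} Corollary~\ref{corollary 3.4} is available, precisely so that the summands appearing are known in advance to be indecomposable. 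Your second paragraph would need an independent count of the simple constituents of $(\mathrm{res}^{\rL}_{\rL'}\widetilde\nu)$ against the number of $\rL'$-orbits of non-degenerate characters to avoid this circularity; that count is not supplied, and it is considerably more work than the paper's direct argument.
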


\begin{proof}
Since $\rY_{\rL', \mu, s}$ is a projective $O [\rL']$-module, the section \S $4.1$ of \cite{GrHi} or Lemma 5.11(Hiss) in \cite{Geck} tells us that it is indecomposable if and only if its $\ell$-regular quotient $\pi_{l'} (\rY_{\rL', \mu, s})$ (see \S 3.3 in \cite{GrHi}) is indecomposable. Inspired by section 5.13. of \cite{Geck}, we consider $K \otimes \pi_{l'}(\rY_{\rL', \mu, s})$, which is the unique irreducible sub-representation of $K \otimes \rY_{\rL', \mu}$ lying in Lusztig serie $\mathcal{E}(\rL', (s))$. The module $\pi_{\ell'}(\rY_{\rL',\mu,s})$ is torsion-free, so we deduce that $\pi_{l'}(\rY_{\rL', \mu, s})$ is indecomposable.
\end{proof}

\begin{prop}
\label{proposition 2.5}
Let $\rL'$ be a split Levi subgroup of $\rG'$, and $\mu \in \rO_{\rU}(\rL')$. All the projective indecomposable direct summands $\rY_{\rL', \mu ,s }$ of Gelfand-Graev lattice $\rY_{\rL',\mu}$ are defined over $O^{unr}$. In particular, there exist indecomposable projective $W(k)[\rL']$-modules $\mathcal{Y}_{\rL',\mu,s}$ such that $\mathcal{Y}_{\rL',\mu,s} \otimes_{W(k)} O\cong \rY_{\rL', \mu, s}$.
\end{prop}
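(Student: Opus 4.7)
The strategy is to realize the Gelfand-Graev construction directly over $O^{unr}$, then extend scalars to $W(k)$. The key observation is that $\rU_{\rL'}$ is a $p$-group, so any non-degenerate character $\mu \colon \rU_{\rL'} \to \overline{\mathcal{K}}^{\times}$ takes values in the group of $p$-power roots of unity; since $\ell \neq p$, this group is contained in $O^{unr}$ (as recalled in Remark \ref{remark 0009}). Hence $\rO_{\mu}$ descends to a free rank one $O^{unr}[\rU_{\rL'}]$-module $(O^{unr})_{\mu}$ on which $\rU_{\rL'}$ acts via $\mu$, and $(O^{unr})_{\mu} \otimes_{O^{unr}} O \cong \rO_{\mu}$.

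Set $\mathcal{Z}_{\rL',\mu} := \ind_{\rU_{\rL'}}^{\rL'} (O^{unr})_{\mu}$, so that $\mathcal{Z}_{\rL',\mu} \otimes_{O^{unr}} O \cong \rY_{\rL',\mu}$. Since $|\rU_{\rL'}|$ is a power of $p$ and $p$ is invertible in $O^{unr}$, Maschke's theorem makes $(O^{unr})_{\mu}$ projective over $O^{unr}[\rU_{\rL'}]$; compact induction then makes $\mathcal{Z}_{\rL',\mu}$ projective over $O^{unr}[\rL']$. By Proposition \ref{proposition 2.3} the idempotent $b_s$ lies in $O^{unr}[\rL']$, so
$$\mathcal{Z}_{\rL',\mu,s} := b_s \cdot \mathcal{Z}_{\rL',\mu}$$
is a projective direct summand of $\mathcal{Z}_{\rL',\mu}$ and satisfies $\mathcal{Z}_{\rL',\mu,s} \otimes_{O^{unr}} O \cong b_s \cdot \rY_{\rL',\mu} = \rY_{\rL',\mu,s}$. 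This establishes the descent to $O^{unr}$.

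For the ``in particular'' clause, set $\mathcal{Y}_{\rL',\mu,s} := \mathcal{Z}_{\rL',\mu,s} \otimes_{O^{unr}} W(k)$; this is a projective $W(k)[\rL']$-module with $\mathcal{Y}_{\rL',\mu,s} \otimes_{W(k)} O \cong \rY_{\rL',\mu,s}$. Indecomposability follows from Corollary \ref{corollary 3.4} by faithfully flat descent: $O$ is a free finite-rank module over the discrete valuation ring $W(k)$, hence faithfully flat, so any nontrivial decomposition of $\mathcal{Y}_{\rL',\mu,s}$ as a $W(k)[\rL']$-module would persist after $-\otimes_{W(k)} O$, contradicting the indecomposability of $\rY_{\rL',\mu,s}$. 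The only genuinely non-routine step is the initial observation that $\mu$ is already defined over $O^{unr}$; the rest is a straightforward combination of earlier results with standard facts about projective modules and faithful flatness.
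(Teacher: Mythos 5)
Your proof is correct and follows essentially the same approach as the paper: descend $\mu$ to $O^{unr}$ using that $\rU_{\rL'}$ is a $p$-group, construct the Gelfand--Graev lattice over $O^{unr}$, cut with $b_s \in O^{unr}[\rL']$ (Proposition~\ref{proposition 2.3}), and base-change to $W(k)$. The only small variation is in the indecomposability step, where the paper compares reductions modulo the maximal ideal to $k$ while you argue directly by faithfully flat base change along $W(k)\hookrightarrow O$; both arguments are valid.
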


\begin{proof}
Since $\rU_{\rL'}$ are $p$-groups, $\mu$ is defined over $O^{unr}$, which is equivalent to say that there is a $O^{unr}[\rU_{\rL'}']$-module $\mathcal{O}_{\mu}$ such that $\rO_{\mu}= \mathcal{O}_{\mu} \otimes_{O^{unr}} O$. Define a projective $O^{unr}[\rL']$-module $\mathrm{Ind}_{\rU_{\rL'}'}^{\rL'}(\mathcal{O}_{\mu})$. Denote by $\mathcal{Y}_{\rL',\mu}= \mathrm{Ind}_{\rU_{\rL'}'}^{\rL'}(\mathcal{O}_{\mu})\otimes_{O^{unr}} W(k)$. Since $k$ is algebraically closed, $\overline{\rY}_{\rL',\mu}=\rY_{\rL',\mu}\otimes_{\rO}k$ coincides with $\overline{\mathcal{Y}}_{\rL',\mu}=\mathcal{Y}_{\rL',\mu}\otimes_{W(k)}k$. By Remark \ref{remark 0009}, we define $\mathcal{Y}_{\rL',\mu,s}=b_s\mathcal{Y}_{\rL',\mu}$, which is indecomposable from the fact that $\mathcal{Y}_{\rL',\mu,s}\otimes_{W(k)}k=\rY_{\rL',\mu,s}\otimes_{O}k$ is indecomposable.
\end{proof}

\begin{rem}
Let $\mathrm{B}_{\rL}$ be a split Borel subgroup of $\rL$, such that $\mathrm{B}_{\rL}\cap\rL'=\mathrm{B}_{\rL'}'$. Since $\rU_{\rL'}'$ is also the unipotent radical of $\mathrm{B}_{\rL}$. We can repeat the proof for $\rY_{\rL,\tilde{s}}$ and see that they are also defined over $O^{unr}$.
\end{rem}

After the above discussion, we consider the $\ell$-modular system $(\cK,W(k),k)$ instead of a splitting system $(K,O,k)$. For a split Levi subgroup $\rL$ of $\rG$, since the set $\mathrm{O_{U}}(\rL)$ consists with only one orbit under conjugation of a split maximal torus of $\rL$, the Gelfand-Graev $W(k)$-lattice is unique, and we denote it by $\cY_{\rL}$. All the discussion above work for $\cY_{\rL}$. In particular, for an $\ell$-regular semisimple element $\tilde{s} \in \rL^{\ast}$, we denote by $\cY_{\rL, \tilde{s}}$ the indecomposable projective direct summand $b_{\tilde{s}} \cdot \cY_{\rL}$. Now we study the relation between $\cY_{\rL, \tilde{s}}$ and $\cY_{\rL', \mu, s}$.


\begin{cor}
\label{corollary 3.1}
Let $\tilde{s} \in \rL^{\ast}$ be a semisimple $\ell$-regular element, then:
$$\mathrm{res}_{\rL'}^{\rL} (b_{\tilde{s}} \cdot \cY_{\rL}) \hookrightarrow b_{s} \cdot \mathrm{res}_{\rL'}^{\rL} \cY_{\rL}.$$
\end{cor}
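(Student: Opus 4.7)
The plan is to upgrade the claim to the set-theoretic inclusion $b_{\tilde{s}} \cdot \cY_{\rL} \subseteq b_s \cdot \cY_{\rL}$ of $\rL'$-submodules of $\mathrm{res}_{\rL'}^{\rL}\cY_{\rL}$, which immediately yields the stated injection. Since $b_{\tilde{s}}$ is central in $O^{unr}[\rL]$ and $b_s \in O^{unr}[\rL'] \subseteq O^{unr}[\rL]$, the two idempotents commute as operators on $\cY_{\rL}$. The inclusion is therefore equivalent to the identity $b_s \cdot b_{\tilde{s}} \cdot y = b_{\tilde{s}} \cdot y$ for every $y \in \cY_{\rL}$, i.e.\ to the statement that $b_s$ acts as the identity on $b_{\tilde{s}} \cdot \cY_{\rL}$.

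Because $\cY_{\rL}$ is torsion-free over $W(k)$, this identity can be checked after extending scalars to $\overline{\cK}$. Over $\overline{\cK}$, the module $b_{\tilde{s}} \cdot (\overline{\cK} \otimes_{W(k)} \cY_{\rL})$ is the sum of the $\tilde{\chi}$-isotypic parts of the Gelfand--Graev representation for those irreducible $\overline{\cK}$-characters $\tilde{\chi}$ lying in
\[
\mathcal{E}_{\ell}(\rL,\tilde{s}) \;=\; \bigcup_{\tilde{t}\in (C_{\rL^{\ast}}(\tilde{s})^{\cF^{\ast}})_{\ell}} \mathcal{E}(\rL,(\tilde{t}\tilde{s})).
\]
So the problem reduces to showing that, for every such $\tilde{\chi}$, each irreducible $\overline{\cK}$-constituent of $\mathrm{res}_{\rL'}^{\rL}\tilde{\chi}$ lies in $\mathcal{E}_\ell(\rL',s)$, so that $b_s$ acts by $1$ on it.

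The key input is the standard compatibility of Lusztig series under the regular embedding $i:\bL' \hookrightarrow \bL$: if $\tilde{\chi} \in \mathcal{E}(\rL,(\tilde{t}\tilde{s}))$ then every irreducible constituent of $\mathrm{res}_{\rL'}^{\rL}\tilde{\chi}$ belongs to $\mathcal{E}(\rL', i^{\ast}(\tilde{t}\tilde{s})) = \mathcal{E}(\rL',(ts))$, where $t=i^{\ast}(\tilde{t})$. Because $i^{\ast}$ is a group homomorphism, $t$ is an $\ell$-element of $C_{\rL'^{\ast}}(s)^{\cF^{\ast}}$ (its order divides that of $\tilde{t}$, which is an $\ell$-power), hence $\mathcal{E}(\rL',(ts)) \subseteq \mathcal{E}_\ell(\rL',s)$, and $b_s$ acts as the identity on the corresponding isotypic component. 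This gives the equality $b_s \cdot b_{\tilde{s}} = b_{\tilde{s}}$ on $\overline{\cK}\otimes\cY_{\rL}$, and therefore on $\cY_{\rL}$ by torsion-freeness.

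The only nontrivial step is the Lusztig-series compatibility under $i$, which is where one appeals to the machinery recalled earlier from \cite{Bonn} together with the observation that $i^{\ast}$ preserves $\ell$-elements; the remaining manipulations are routine idempotent/lattice bookkeeping. The main obstacle, if any, is notational: one must track simultaneously the two semisimple parameters $\tilde{s}$ and $s = i^{\ast}(\tilde{s})$ together with the $\ell$-element perturbations $\tilde{t}$ and $t = i^{\ast}(\tilde{t})$ across the regular embedding, and verify that everything fits within the enlarged Lusztig series $\mathcal{E}_\ell(\rL',s)$.
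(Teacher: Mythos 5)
Your proof is correct and is essentially the same argument as the paper's. The paper writes $\mathrm{res}_{\rL'}^{\rL}(b_{\tilde{s}}\cY_{\rL}) = \bigoplus_{[s']} b_{s'}\cdot\mathrm{res}_{\rL'}^{\rL}(b_{\tilde{s}}\cY_{\rL})$ and invokes Proposition 11.7 of \cite{Bonn} (the Lusztig-series compatibility under the regular embedding) together with torsion-freeness over $W(k)$ to show that the summands with $[s']\neq[s]$ vanish; you instead show directly that $b_s$ acts as the identity on $b_{\tilde{s}}\cdot\cY_{\rL}$ by the same compatibility checked over $\overline{\cK}$, which is formally equivalent since $\sum_{[s']}b_{s'}=1$.
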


\begin{proof}
We know directly from definition that for any semisimple $\ell$-regular $s' \in \rG'^{\ast}$:
$$b_{s'} \cdot \mathrm{res}_{\rL'}^{\rL} (b_{\tilde{s}} \cdot \cY_{\rL}) \hookrightarrow b_{s'} \cdot \mathrm{res}_{\rL'}^{\rL} \cY_{\rL},$$
Meanwhile $b_{s'} \cdot \mathrm{res}_{\rL'}^{\rL} (b_{\tilde{s}} \cdot \cY_{\rL})$ is a projective $W(k) [\rG']$-module, so it is free over $W(k)$. Proposition 11.7 in \cite{Bonn} told us that $b_{s'} \cdot \mathrm{res}_{\rL'}^{\rL} (b_{\tilde{s}} \cdot \cY_{\rL}) \otimes \overline{\cK} = 0$ if $[s'] \neq [s]$ with $s= i^{\ast}(\tilde{s})$, which means $b_{s'} \cdot \mathrm{res}_{\rL'}^{\rL} (b_{\tilde{s}} \cdot \cY_{\rL})  = 0$.
Combine this with
$$\bigoplus_{[s']} b_{s'} \cdot \mathrm{res}_{\rL'}^{\rL} (b_{\tilde{s}} \cdot \cY_{\rL})= \mathrm{res}_{\rL'}^{\rL} (b_{\tilde{s}} \cdot \cY_{\rL}),$$
where $[s']$ run over the semisimple conjugacy classes of $\rL^{\ast}$. We obtain the result.
\end{proof}

\begin{prop}
\label{proposition 3}
For a split Levi subgroup $\rL$ of $\rG$, let $\rL'$ be the split Levi subgroup $\rL\cap \rG'$ of $\rG'$. Denote by $\rZ(\rL)$ and $\rZ(\rL')$ the center of $\rL$ and $\rL'$ respectively. We have an equation:
$$\mathrm{res}_{\rL'}^{\rL} \cY_{\rL} = \vert \rZ(\rL) : \rZ(\rL') \vert \bigoplus_{[\mu] \in \rO_{\mathrm{U}}(\rL')} \cY_{\rL',\mu},$$
where $[\mu]$ denote the $\rT'$-orbit of $\mu$.
\end{prop}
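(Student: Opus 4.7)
The plan is to compute $\mathrm{res}_{\rL'}^{\rL}\cY_\rL$ directly via Mackey's formula and to group the resulting summands by $\rT'$-orbits. By the definition recalled just before the proposition, $\cY_\rL=\mathrm{Ind}_{\rU_\rL}^{\rL}\mathcal{O}_\mu$ for \emph{any} choice of $\mu\in\rO_{\mathrm{U}}(\rL)$, since all non-degenerate characters form a single $\rT$-orbit. Because every element of $\rU_\rL$ is unipotent, hence has determinant $1$, one has $\rU_\rL\subset\rL'$ and therefore $\rL'\cdot\rU_\rL=\rL'$. Mackey's formula then collapses to
$$\mathrm{res}_{\rL'}^{\rL}\cY_\rL \;=\; \bigoplus_{x\in\rL'\backslash\rL}\mathrm{Ind}_{x\rU_\rL x^{-1}}^{\rL'}\mathcal{O}_{x\mu},$$
where we use that each $x\rU_\rL x^{-1}$ is again unipotent, hence already lies in $\rL'$.

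Next, the product-of-determinants map $\rL\to\F_q^*$ is surjective already on $\rT$, so $\rL=\rL'\rT$ and the canonical map $\rT/\rT'\to\rL/\rL'$ is an isomorphism. I would pick the coset representatives $x$ inside $\rT$, and then use that $\rT$ normalises $\rU_\rL$ to simplify every Mackey summand to $\mathrm{Ind}_{\rU_\rL}^{\rL'}\mathcal{O}_{x\mu}=\cY_{\rL',x\mu}$. This yields
$$\mathrm{res}_{\rL'}^{\rL}\cY_\rL \;=\; \bigoplus_{x\in\rT/\rT'}\cY_{\rL',x\mu}.$$
By the remark preceding the proposition, $\cY_{\rL',x\mu}\cong\cY_{\rL',x'\mu}$ precisely when $x\mu$ and $x'\mu$ lie in a common $\rT'$-orbit, so only the $\rT'$-orbit of $x\mu$ enters the right-hand side.

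It then remains to verify that each $\rT'$-orbit $[\nu]\in\rO_{\mathrm{U}}(\rL')/\rT'$ occurs with multiplicity exactly $|\rZ(\rL):\rZ(\rL')|$. I would consider the map $\rT/\rT'\to\rO_{\mathrm{U}}(\rL')/\rT'$ sending $x\rT'$ to $\rT'\cdot(x\mu)$, which is well-defined and surjective because $\rT$ already acts transitively on $\rO_{\mathrm{U}}(\rL')=\rO_{\mathrm{U}}(\rL)$. A direct Whittaker-style calculation, done block-wise on each $\GL_{n_i}$-factor of $\rL$, identifies $\mathrm{Stab}_{\rT}(\mu)=\rZ(\rL)$; the second isomorphism theorem then gives that every fibre of this map is a coset of
$$\rT'\rZ(\rL)/\rT'\;\cong\;\rZ(\rL)/\bigl(\rZ(\rL)\cap\rT'\bigr)\;=\;\rZ(\rL)/\rZ(\rL'),$$
hence of uniform size $|\rZ(\rL):\rZ(\rL')|$. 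Substituting this counting into the previous display gives exactly the claimed decomposition. The only non-formal step is the stabiliser identification $\mathrm{Stab}_\rT(\mu)=\rZ(\rL)$; everything else is bookkeeping around Mackey, the factorisation $\rL=\rL'\rT$, and the $\rT$-equivariance of $\rU_\rL$.
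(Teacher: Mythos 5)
Your argument is correct and its skeleton matches the paper's: both proofs reduce the restriction to a Mackey decomposition $\mathrm{res}_{\rL'}^{\rL}\cY_{\rL}\cong\bigoplus_{x\in\rT/\rT'}\cY_{\rL',x\mu}$, then count how often each $\rT'$-orbit of non-degenerate characters is hit. (The paper first rewrites $\cY_\rL$ as $\mathrm{ind}_{\rL'}^{\rL}\cY_{\rL',\mu}$ before applying Mackey; you apply Mackey directly to $\mathrm{Ind}_{\rU_\rL}^{\rL}\mathcal{O}_\mu$ — same thing.) Where you genuinely diverge is the final counting step. The paper obtains the multiplicity by a stabiliser-chain argument:
$$\mathrm{Stab}_\rT([\mu])\subset\mathrm{Stab}_\rT(\cY_{\rL',\mu})\subset\mathrm{Stab}_\rT(\cY_{\rL',\mu}\otimes\overline{\mathcal{K}})\subset\mathrm{Stab}_\rT([\mu]),$$
where the last inclusion is extracted from the proof of Lemma 2.3(a) of Dipper--Fleischmann, and the count is then deduced from the statement of that lemma. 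You instead compute $\mathrm{Stab}_\rT(\mu)=\rZ(\rL)$ directly by the block-wise Whittaker calculation and use the second isomorphism theorem together with $\rZ(\rL)\cap\rT'=\rZ(\rL')$ to get fibres of size $|\rZ(\rL):\rZ(\rL')|$. Your version is more elementary and self-contained; the paper's version yields the sharper byproduct that the map $[\mu]\mapsto\cY_{\rL',\mu}$ is injective (non-conjugate orbits give non-isomorphic lattices), which neither proof strictly needs for the displayed isomorphism. One small caution in your writeup: you say that $\cY_{\rL',x\mu}\cong\cY_{\rL',x'\mu}$ \emph{precisely} when $x\mu$ and $x'\mu$ are $\rT'$-conjugate and attribute it to the remark before the proposition, but that remark only gives the ``if'' direction; the ``only if'' is the content of the DiFl stabiliser identity and is not actually needed for your counting, so you should drop the word ``precisely'' or separately justify the converse.
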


\begin{proof}
Let $\mathrm{B}$ be a split Borel subgroup of $\rL$ and $\mathrm{B}'= \mathrm{B} \cap \rL'$ the corresponding split Borel of $\rL'$, and $\rU'$ denotes the unipotent radical of $\mathrm{B}'$, observing that $\rU'$ is also the unipotent radical of $\mathrm{B}$. Fixing one non-degenerate character $\mu$ of $\rU'$, let $\rO_{\mu}$ be its $W(k) [\rU']$-lattice. By the transitivity of induction, we have:
$$\cY_{\rL}= \mathrm{ind}_{\rL'}^{\rL} \circ \mathrm{ind}_{\rU'}^{\rL'} \rO_{\mu} = \mathrm{ind}_{\rL'}^{\rL} \cY_{\rL', \mu}.$$
Since $[\rT:\rT']=[\rL: \rL']$, by using Mackey formula we have:
$$\mathrm{res}_{\rL'}^{\rL} \cY_{\rL} = \bigoplus_{\alpha_i \in [\rT : \rT']} ad(\alpha_i) ( \cY_{\rL', \mu}),$$
where $ad(\cdot)$ denotes the conjugation operator. Furthermore, $ad(\alpha_i) ( \mathrm{ind}_{\rU'}^{\rL'} \rO_{\mu} ) = \mathrm{ind}_{\rU'}^{\rL'} ( ad(\alpha_i) ( \rO_{\mu}) )$. 
Notice that after fixing one character of $\rU'$, all its $W(k)[\rU']$-lattices are equivalent, so $ad(\alpha_i) ( \cY_{\rL', \mu} )= \cY_{\rL', ad(\alpha_i) (\mu)}$. Hence, let $[\mu]$ denote the $\rT'$-orbit of $\mu$ in $\rO_{\mathrm{U}}(\rL')$, we have
$$\mathrm{Stab_{\rT}([\mu])} \subset \mathrm{Stab}_{\rT}(\cY_{\rL',\mu}) \subset \mathrm{Stab}_{\rT}(\cY_{\rL', \mu} \otimes \overline{\cK}),$$
where $\mathrm{Stab}$ denotes the group of stabiliser. On the other hand, the proof of lemma 2.3 a) in \cite{DiFl} tells that 
$$\mathrm{Stab}_{\rT}(\cY_{\rL', \mu} \otimes \overline{\cK}) \subset \mathrm{Stab_{\rT}([\mu])}.$$
So the inclusion above is in fact a bijection.
Combine this with the statement of lemma 2.3 a) in \cite{DiFl}, we finish our proof.
\end{proof}

\begin{prop}
\label{lemma a.1}
Fix a semisimple $\ell$-regular $s \in \rG'^{\ast}$, define $\mathcal{S}_{[s]}$ to be the set of semisimple $\ell$-regular $\widetilde{\rG}^{\ast}$-conjugacy classes $[\tilde{s}] \subset \widetilde{\rG}^{\ast}$ such that $i^{\ast}[\tilde{s}]=[s]$. Then
$$\bigoplus_{[\tilde{s}] \in \mathcal{S}_{[s]}} \mathrm{res}_{\rL'}^{\rL} \cY_{\rL,\tilde{s}}= \vert \rZ(\rL) : \rZ(\rL') \vert \bigoplus_{\mu \in \rO_{\rU'}(\rL')} \cY_{\rL',\mu,s} $$
\end{prop}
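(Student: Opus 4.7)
The plan is to apply the central idempotent $b_s\in O^{unr}[\rL']$ to the identity of Proposition \ref{proposition 3}, and to match the result against a sum of block components of $\mathrm{res}_{\rL'}^{\rL}\cY_{\rL}$ coming from the $\rL$-side block decomposition supplied by Corollary \ref{corollary 3.1}.

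First, since the idempotents $b_{\tilde s}\in O^{unr}[\rL]$ (Theorem \ref{theorem Mi,Br} and Remark \ref{remark 0009}) are pairwise orthogonal with $\sum_{[\tilde s]} b_{\tilde s}=1$, the Gelfand--Graev $W(k)[\rL]$-lattice splits as
$$\cY_{\rL}=\bigoplus_{[\tilde s]}\cY_{\rL,\tilde s},\qquad \text{hence}\qquad \mathrm{res}_{\rL'}^{\rL}\cY_{\rL}=\bigoplus_{[\tilde s]}\mathrm{res}_{\rL'}^{\rL}\cY_{\rL,\tilde s},$$
where $[\tilde s]$ runs over the semisimple $\ell$-regular $\rL^{\ast}$-conjugacy classes. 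The proof of Corollary \ref{corollary 3.1} establishes moreover that for every $\ell$-regular semisimple $[s']$ in $\rL'^{\ast}$ one has $b_{s'}\cdot\mathrm{res}_{\rL'}^{\rL}\cY_{\rL,\tilde s}=0$ whenever $[s']\neq i^{\ast}[\tilde s]$. Acting by $b_s$ on the decomposition above therefore kills every summand except those indexed by $[\tilde s]\in\cS_{[s]}$, giving
$$b_s\cdot\mathrm{res}_{\rL'}^{\rL}\cY_{\rL}=\bigoplus_{[\tilde s]\in\cS_{[s]}}\mathrm{res}_{\rL'}^{\rL}\cY_{\rL,\tilde s}.$$

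On the other hand, applying $b_s$ to the equality of Proposition \ref{proposition 3} and using $b_s\cdot\cY_{\rL',\mu}=\cY_{\rL',\mu,s}$ by definition, one obtains
$$b_s\cdot\mathrm{res}_{\rL'}^{\rL}\cY_{\rL}=|\rZ(\rL):\rZ(\rL')|\bigoplus_{[\mu]\in\rO_{\rU}(\rL')}\cY_{\rL',\mu,s}.$$
Equating the two expressions for $b_s\cdot\mathrm{res}_{\rL'}^{\rL}\cY_{\rL}$ yields the claim.

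The only mildly delicate point is to extract from the proof of Corollary \ref{corollary 3.1} the full orthogonality statement $b_{s'}\cdot\mathrm{res}_{\rL'}^{\rL}\cY_{\rL,\tilde s}=0$ for $[s']\neq i^{\ast}[\tilde s]$ rather than merely the inclusion stated there; this is already shown inside that proof, so once the block decomposition is in place at the integral level over $W(k)$ the rest is bookkeeping. All other ingredients (orthogonality and integrality of the $b_{\tilde s}$, splitting of $\mathrm{ind}_{\rU'}^{\rL'}$ commuting with restriction to $\rL'$) were collected in Section \ref{chapter 3} prior to this statement.
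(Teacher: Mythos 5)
Your proof is correct and follows essentially the same route as the paper: both apply the idempotent $b_s$ to the identity of Proposition \ref{proposition 3} and invoke Corollary \ref{corollary 3.1} to sort the summands $\mathrm{res}_{\rL'}^{\rL}\cY_{\rL,\tilde s}$ by the $\rG'^{\ast}$-class $i^{\ast}[\tilde s]$.

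The only difference is in the final step. The paper uses Corollary \ref{corollary 3.1} as stated (an injection), concludes only that $\bigoplus_{[\tilde s]\in\cS_{[s]}}\mathrm{res}_{\rL'}^{\rL}\cY_{\rL,\tilde s}$ is a direct summand of the right-hand side, and then upgrades this to an equality by summing over all $[s]$ (which recovers Proposition \ref{proposition 3} on both sides) together with the bijection $\cS\leftrightarrow\{[s]\}$ and Krull--Schmidt. You instead reach directly into the proof of Corollary \ref{corollary 3.1}, where the stronger orthogonality statement $b_{s'}\cdot\mathrm{res}_{\rL'}^{\rL}\cY_{\rL,\tilde s}=0$ for $[s']\neq i^{\ast}[\tilde s]$ is actually established; combined with $\sum_{[s']}b_{s'}=1$ this forces $b_s\cdot\mathrm{res}_{\rL'}^{\rL}\cY_{\rL,\tilde s}=\mathrm{res}_{\rL'}^{\rL}\cY_{\rL,\tilde s}$ whenever $i^{\ast}[\tilde s]=[s]$, so applying $b_s$ to Proposition \ref{proposition 3} gives the block equality outright, with no need for the global counting argument. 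This is a small but genuine streamlining: it avoids the appeal to Krull--Schmidt and makes it transparent exactly which block of $\mathrm{res}_{\rL'}^{\rL}\cY_{\rL}$ each $\mathrm{res}_{\rL'}^{\rL}\cY_{\rL,\tilde s}$ lives in.
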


\begin{proof}
By definition that $\cY_{\rL, \tilde{s}}= b_{\tilde{s}} \cdot \cY_{\rL}$. Multiplying $b_s$ on both sides of the equation in Proposition \ref{proposition 3}  and considering Corollary \ref{corollary 3.1}, we conclude that for any $\ell$-regular semisimple $\rG'^{\ast}$-conjugacy class $[s]$, $\bigoplus_{[\tilde{s}] \in \mathcal{S}_{[s]}} \mathrm{res}_{\rL'}^{\rL} \cY_{\rL,\tilde{s}}$ is a projective direct summand of $\vert \rZ(\rL) : \rZ(\rL') \vert \bigoplus_{\mu \in \rO_{\rU}(\rL')} \cY_{\rL',\mu,s}$. Meanwhile, let $\mathcal{S}=\{ \mathcal{S}_{[s]}\vert\text{ } s\in\rG^{'\ast}, s\text{ semisimlpe }\ell\text{-regular} \}$, then Proposition \ref{proposition 3} can be written as:
$$\bigoplus_{\mathcal{S}_{[s]}\in \mathcal{S}}\bigoplus_{[\tilde{s}] \in \mathcal{S}_{[s]}} \mathrm{Res}_{\rL'}^{\rL} \cY_{\rL,\tilde{s}}= \vert \rZ(\rL) : \rZ(\rL') \vert \bigoplus_{[s]} \bigoplus_{\mu \in \rO_{\rU}(\rL')} \cY_{\rL',\mu,s} $$
Since there is a natural bijection between $\mathcal{S}$ and the set of semisimple conjugacy classes $\{ [ s ] \}$ in $\rG'^{\ast}$, we deduce the equation desired.
\end{proof}

\subsection{Uniqueness of supercuspidal support}
In this part, we will prove the main theorem \ref{theorem 1} for this section. 



\begin{thm} \label{theorem 1}
Let $\rL'$ be a standard split Levi subgroup of $\rG'$ and $\nu$ be a cuspidal $k$-representation of $\rL'$.Then the supercuspidal support of $\nu$ is unique up to $\rL'$-conjugation.
\end{thm}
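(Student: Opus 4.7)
The strategy, as outlined in the introduction, is to realise the $W(k)[\rL']$-projective cover $\mathcal{P}_\nu$ of $\nu$ as one of the Gelfand-Graev lattices constructed above, and then to compute its parabolic restriction to any standard Levi $\rM' \subset \rL'$.

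First I would build $\mathcal{P}_\nu$. By Clifford theory for the finite-index inclusion $\rL' \hookrightarrow \rL$, there is an irreducible cuspidal $k$-representation $\pi$ of $\rL$ whose restriction to $\rL'$ contains $\nu$; cuspidality of $\pi$ propagates from that of $\nu$ via the identity $r_{\rM'}^{\rL'}\res_{\rL'}^{\rL} = \res_{\rM'}^{\rM}r_{\rM}^{\rL}$ (valid because the two unipotent radicals coincide) together with the fact that a nonzero $\rM$-module has nonzero restriction to $\rM'$. Let $[\tilde s] \subset \rL^*$ be the $\ell$-regular semisimple conjugacy class attached to $\pi$ via Deligne--Lusztig theory, and $[s] = [i^*(\tilde s)] \subset \rL'^*$ the corresponding class. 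By the $\GL_n$ theory of Vign\'eras, $\pi$ occurs as an irreducible quotient of $\cY_{\rL,\tilde s} \otimes_{W(k)} k$; restricting to $\rL'$ and invoking Proposition \ref{lemma a.1}, one finds $\mu \in \rO_\rU(\rL')$ such that $\nu$ is an irreducible quotient of $\cY_{\rL',\mu,s} \otimes_{W(k)} k$. Since $\cY_{\rL',\mu,s}$ is indecomposable projective over $W(k)[\rL']$ by Corollary \ref{corollary 3.4} and Proposition \ref{proposition 2.5}, it serves as $\mathcal{P}_\nu$.

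Now let $(\rM',\sigma')$ be an arbitrary supercuspidal pair in the supercuspidal support of $\nu$, so that $\nu$ is a subquotient of $i_{\rM'}^{\rL'} \sigma'$. Projectivity of $\mathcal{P}_\nu$ combined with Frobenius reciprocity then yields a nonzero map $r_{\rM'}^{\rL'} \cY_{\rL',\mu,s} \otimes_{W(k)} k \twoheadrightarrow \sigma'$, so $\sigma'$ appears as an irreducible quotient of the $k$-reduction of $r_{\rM'}^{\rL'} \cY_{\rL',\mu,s}$. The heart of the argument is to identify the latter. Let $\rM \subset \rL$ be the standard Levi with $\rM \cap \rL' = \rM'$; since the unipotent radicals of the corresponding parabolics in $\rL$ and $\rL'$ coincide, we have
$$r_{\rM'}^{\rL'} \circ \res_{\rL'}^\rL = \res_{\rM'}^\rM \circ r_\rM^\rL.$$
Combining this with Proposition \ref{lemma a.1} (expressing $\res_{\rL'}^\rL \cY_{\rL,\tilde s}$ in terms of the $\cY_{\rL',\mu,s}$) and with the $\GL_n$ result of Vign\'eras (which identifies $r_\rM^\rL \cY_{\rL,\tilde s}$ as a Gelfand-Graev lattice $\cY_{\rM,\tilde s'}$ for $\tilde s'$ in a specific $\rM^*$-orbit, corresponding to the supercuspidal support of $\pi$ inside $\rM$), one obtains, after tracking multiplicities, that $r_{\rM'}^{\rL'} \cY_{\rL',\mu,s}$ decomposes as a direct sum of Gelfand-Graev lattices $\cY_{\rM',\mu_j,s'}$ with $s' = i^*(\tilde s')$.

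The irreducible quotients of the $k$-reductions of these $\cY_{\rM',\mu_j,s'}$ are all supercuspidal representations of $\rM'$ attached to the single semisimple conjugacy class $[s'] \subset \rM'^*$, and the characters $\mu_j$ that appear are permuted by the $\rL'$-normaliser of $\rM'$; this forces $(\rM',\sigma')$ to be unique up to $\rL'$-conjugation. I expect the main obstacle to be the bookkeeping in the previous step: the set $\rO_\rU(\rL')$ splits into several $\rT'$-orbits of characters (in contrast to the unique $\rT$-orbit in $\rO_\rU(\rL)$), and one must carefully track these orbits and their interaction with the passage from $\rM$ to $\rM'$ in order to identify the indecomposable summands of $r_{\rM'}^{\rL'}\cY_{\rL',\mu,s}$ as Gelfand-Graev lattices for $\rM'$ organised in a single $\rL'$-orbit of supercuspidal representations.
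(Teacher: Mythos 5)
Your overall strategy matches the paper's: realise the projective cover $\mathcal{P}_\nu$ as $\cY_{\rL',\mu,s}$ via Proposition \ref{prop a}, then study $r_{\rM'}^{\rL'}\cY_{\rL',\mu,s}$. The construction of $\mathcal{P}_\nu$ and the reduction to a statement about $r_{\rM'}^{\rL'}\mathcal{P}_\nu$ via Hiss's criterion are carried out correctly.

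The genuine gap is at the central step. You assert that $r_{\rM'}^{\rL'}\cY_{\rL',\mu,s}$ ``decomposes as a direct sum of Gelfand--Graev lattices $\cY_{\rM',\mu_j,s'}$'' and then hand-wave that the $\mu_j$ which appear are permuted by $\mathrm{N}_{\rL'}(\rM')$, leaving a single $\rL'$-conjugacy class of supercuspidal pairs. Neither of these claims is established. A priori, $r_{\rM'}^{\rL'}\cY_{\rL',\mu,s}$ is merely a projective direct summand of $\res_{\rM'}^{\rM}\cY_{\rM,\tilde s'}$; tracking multiplicities through Proposition \ref{lemma a.1} tells you what the \emph{ambient} module is, not which summands you actually pick up, and the constituent characters $\mu_j$ need not, at that stage, lie in a single $\mathrm{N}_{\rL'}(\rM')$-orbit. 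The paper's Proposition \ref{prop b} proves something cleaner and stronger: $r_{\rM'}^{\rL'}\cY_{\rL',\mu,s}$ is either zero or \emph{indecomposable}, and in the latter case it is precisely $\cY_{\rM',\mu|_{\rM'},s'}$. The proof goes through Bonnaf\'e's Corollary 15.15, which identifies $r_{\rM'}^{\rL'}\cY_{\rL',\mu,s}\otimes\overline{\mathcal{K}}$ with $\cY_{\rM',\mu|_{\rM'},s'}\otimes\overline{\mathcal{K}}$; this forces the $\ell$-regular quotient to be indecomposable, hence the lattice is indecomposable by Geck's Lemma 5.11, and Bonnaf\'e's multiplicity-one Corollary 15.11 then pins it down inside $\res_{\rM'}^{\rM}\cY_{\rM,\tilde s'}$. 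Without some such input the uniqueness claim does not follow.

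You also do not address the second half of the paper's strategy: that there is a \emph{unique} standard Levi $\rM'$ of $\rL'$ (up to conjugacy) for which $r_{\rM'}^{\rL'}\mathcal{P}_\nu$ is nonzero and cuspidal. The paper's final proposition reduces this to the known uniqueness for $\rG$ by showing that $r_{\rM'}^{\rL'}\cY_{\rL',\mu,s}=0$ if and only if $r_{\rM}^{\rL}\cY_{\rL,\tilde s}=0$, again using Bonnaf\'e's Corollary 15.15 on the $\ell$-regular quotient. Your sketch invokes the $\GL_n$ result in passing but does not carry out this equivalence, so the uniqueness of $\rM'$ is left open.
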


Let $\mathrm{P}_{\nu}$ denote the $W(k)[\rL']$-projective cover of $\nu$. To prove the theorem above, we will follow the strategy below:

\begin{enumerate}
\item For any standard Levi subgroup $\rM'$ of $\rL'$, prove that $r_{\rM'}^{\rL'} \mathrm{P}_{\nu}$ is either equal to $0$ or indecomposable.
\item Prove that there is only one unique standard split Levi subgroup $\rM'$ of $\rL'$, such that $r_{\rM'}^{\rL'} \mathrm{P}_{\nu}$ is cuspidal.
\end{enumerate}
Let $(\rM', \theta)$ be a supercuspidal $k$-pair of $\rL'$. From the proof of Proposition 3.2 of \cite{Hiss}, we have that $(\rM', \theta)$ belongs to the supercuspidal support of $(\rL', \nu)$, if and only if $\mathrm{Hom}(r_{\rM'}^{\rL'} \mathrm{P}_{\nu}, \theta) \neq 0$. Combining this fact with step $1$ as above, we obtain that $r_{\rM'}^{\rL'}\mathrm{P}_{\nu}$ is the $W(k)[\rM']$-projective cover of $\theta$. Proposition $2.3$ of \cite{Hiss} states that an irreducible $k$-representation of $\rM'$ is supercuspidal if and only if its projective cover is cuspidal, hence Theorem \ref{theorem 1} is equivalent to step $2$.

\begin{rem}
\label{remark 2.11}
\begin{itemize}
\item The discussion above is true as well for Levi subgroups $\rL$ of $\rG$.
\item Proposition 3.2 of \cite{Hiss} concerns $k[\rL']$-projective cover, but from Proposition 42 of \cite{Serre} we know that there is a surjective morphism of $k[\rL']$-modules from the $W(k)[\rL']$-projective cover to the $k[\rL']$-projective cover, and hence obtain the same result for $W(k)[\rL']$-projective cover.
\end{itemize}
\end{rem}

\begin{prop}
\label{prop a}
Let $\nu$ be an irreducible cuspidal $k$-representation of $\rL'$. There exists a simple $k[\rL]$-module $\widetilde{\nu}$, and a surjective morphism $\mathrm{res}_{\rL'}^{\rL} \widetilde{\nu} \twoheadrightarrow \nu $. Furthermore, let $\cY_{\rL, \tilde{s}}$ be the $W(k)[\rL]$-projective cover of $\widetilde{\nu}$, where $\tilde{s} \in
\rG^{\ast}$ is an $\ell$-regular semisimple element, then there exists $\mu \in \rO_{\rU'}(\rL')$ such that $\cY_{\rL',\mu,s}$ is the $W(k)[\rL']$-projective cover of $\nu$.
\end{prop}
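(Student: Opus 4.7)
The plan is to realise $\widetilde\nu$ as an irreducible constituent of $\ind_{\rL'}^{\rL}\nu$ and then transport the $W(k)$-projective cover from $\rL$ down to $\rL'$ via Proposition~\ref{lemma a.1}.

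First I would take $\widetilde\nu$ to be any irreducible sub-representation of $\ind_{\rL'}^{\rL}\nu$. Frobenius reciprocity immediately yields a nonzero, hence surjective, $\rL'$-morphism $\res_{\rL'}^{\rL}\widetilde\nu\twoheadrightarrow\nu$. I claim $\widetilde\nu$ is cuspidal: Clifford's theorem makes $\res_{\rL'}^{\rL}\widetilde\nu$ a semisimple $k[\rL']$-module whose constituents are all $\rL$-conjugate to $\nu$ and therefore all cuspidal; since the unipotent radical $\rU_{\rP}$ of every standard parabolic $\rP\subset\rL$ already lies in $\rL'$, one has the intertwining $r_{\rM'}^{\rL'}\circ\res_{\rL'}^{\rL}=\res_{\rM'}^{\rM}\circ r_{\rM}^{\rL}$, so the vanishing of $r_{\rM'}^{\rL'}\res\widetilde\nu$ forces $r_{\rM}^{\rL}\widetilde\nu=0$ for every proper Levi $\rM$ of $\rL$.

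Next, since $\rL$ is a direct product of general linear groups over $\mathbb{F}_q$ (a group with connected centre), each irreducible cuspidal $k$-representation of $\rL$ is generic and occurs as the unique simple head of a Gelfand-Graev direct summand $\cY_{\rL,\tilde s}$ for some $\ell$-regular semisimple $\tilde s\in\rL^{\ast}$ attached to the Lusztig series of $\widetilde\nu$. Combined with Corollary~\ref{corollary 3.4} applied on the $\rL$-side and the compatibility between $W(k)$- and $k$-projective covers recalled in Remark~\ref{remark 2.11}, this identifies $\cY_{\rL,\tilde s}$ as the $W(k)[\rL]$-projective cover of $\widetilde\nu$. Composing the two surjections gives $\res_{\rL'}^{\rL}\cY_{\rL,\tilde s}\twoheadrightarrow\nu$; by Proposition~\ref{lemma a.1} the left-hand side decomposes into copies of various $\cY_{\rL',\mu,s}$ with $s=i^{\ast}(\tilde s)$, each of which is indecomposable projective with a unique simple quotient (Corollary~\ref{corollary 3.4}). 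At least one of them must therefore have $\nu$ as its head, and that summand is the required $W(k)[\rL']$-projective cover. The main obstacle is the identification of $\cY_{\rL,\tilde s}$ with the projective cover of $\widetilde\nu$: once this concrete shape for the $\rL$-side cover is pinned down, the $\rL'$-side statement drops out from the restriction bookkeeping already carried out in Section~\ref{chapter 3}.
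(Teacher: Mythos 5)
Your proposal is correct and follows essentially the same route as the paper: find $\widetilde\nu$ inside $\ind_{\rL'}^{\rL}\nu$ (the paper invokes Mackey, you use Frobenius reciprocity plus Clifford, which is the same content since $\rL'\trianglelefteq\rL$), identify $\cY_{\rL,\tilde s}$ as its $W(k)[\rL]$-projective cover, restrict, and read off the indecomposable projective summand covering $\nu$ from Proposition~\ref{lemma a.1} and Corollary~\ref{corollary 3.4}. Your explicit verification that $\widetilde\nu$ is cuspidal (via $r_{\rM'}^{\rL'}\circ\res_{\rL'}^{\rL}=\res_{\rM'}^{\rM}\circ r_{\rM}^{\rL}$) usefully fills a step the paper leaves implicit.
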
 

\begin{proof}
By using Mackey formula to $\ind_{\rL'}^{\rL}\nu$, we can find such $\widetilde{\nu}$.

Since the restriction functor respects projectivity, we deduce the second part from Corollary \ref{corollary 3.4} and Proposition \ref{lemma a.1}.
\end{proof}


Let $\rM'$ be a standard split Levi subgroup of $\rL'$. It is clear that $\mu \vert_{\rM'}$ belongs to $\rO_{\rU'}(\rM')$. Now consider the intersection $[s] \cap \rM'^{\ast}$. As in the paragraph above Proposition 5.10 of \cite{Helm}, $[\tilde{s}] \cap \rM^{\ast}$ consists of one $\rM^{\ast}$-conjugacy class or is empty, so does $[s] \cap \rM'^{\ast}$. For the first case, $\cY_{\rM', \mu \vert_{\rM'}, [s] \cap \rM'^{\ast}}$ is well defined, and for the second case, we define it to be 0. From now on, we will always use $\cY_{\rM', \mu, s}$ to simplify $\cY_{\rM', \mu \vert_{\rM'}, [s] \cap \rM'^{\ast}}$. We use the same manner to define $\cY_{\rM, \tilde{s}}$.

\begin{prop}
\label{prop b}
Let $\nu$ be an irreducible cuspidal $k[\rL']$-representation, and $\widetilde{\nu}$, $\cY_{\rL', \mu, s}$, $\cY_{\rL, \tilde{s}}$ be as in Proposition $\ref{prop a}$. Then $r_{\rM'}^{\rL} \cY_{\rL', \mu, s}$ is equal to $0$ or indecomposable and isomorphic to $\cY_{\rM', \mu, s}$ as $W(k)[\rM']$-module.
\end{prop}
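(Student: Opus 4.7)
The plan is to reduce the statement to its analogue for $\mathrm{GL}_n$. The crucial observation is that the unipotent radical of the standard parabolic of $\rL$ with Levi $\rM$ lies inside $\bG'$ and coincides with the unipotent radical of the standard parabolic of $\rL'$ with Levi $\rM'$; this yields a natural commutativity $r_{\rM'}^{\rL'} \circ \mathrm{res}_{\rL'}^{\rL} \cong \mathrm{res}_{\rM'}^{\rM} \circ r_{\rM}^{\rL}$ between parabolic restriction and ordinary restriction. Combining this with the $\mathrm{GL}_n$ analogue of the statement---namely, that $r_{\rM}^{\rL}\cY_{\rL,\tilde{s}}$ is either zero (when $[\tilde{s}]\cap\rM^{\ast}=\emptyset$) or indecomposable and isomorphic to $\cY_{\rM,\tilde{s}}$, a consequence of Helm's analysis in \cite{Helm} (cf.\ Proposition~5.10 loc.\ cit.)---provides the required input on the $\mathrm{GL}_n$ side.

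I would then apply the functor $r_{\rM'}^{\rL'}$ to the identity of Proposition~\ref{lemma a.1}. Using the commutativity above together with Helm's formula, the left-hand side transforms into $\bigoplus_{[\tilde{s}] \in \mathcal{S}_{[s]}} \mathrm{res}_{\rM'}^{\rM}\cY_{\rM,\tilde{s}}$, which by a second application of Proposition~\ref{lemma a.1} (now at the level of $\rM,\rM'$) equals $|\rZ(\rM):\rZ(\rM')|\bigoplus_{\mu' \in \rO_{\rU'}(\rM')} \cY_{\rM',\mu',s}$. The right-hand side becomes $|\rZ(\rL):\rZ(\rL')|\bigoplus_{\mu \in \rO_{\rU'}(\rL')} r_{\rM'}^{\rL'}\cY_{\rL',\mu,s}$. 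This produces a Krull--Schmidt identity between two direct sums of indecomposable projective $W(k)[\rM']$-modules.

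To extract the per-$\mu$ conclusion, I would note that each $\cY_{\rL',\mu,s}$ is indecomposable (Corollary~\ref{corollary 3.4}) and that parabolic restriction preserves projectivity, so each $r_{\rM'}^{\rL'}\cY_{\rL',\mu,s}$ is a direct sum of indecomposable projective $W(k)[\rM']$-modules supported in the block attached to $[s]\cap\rM'^{\ast}$, and hence each summand is of the form $\cY_{\rM',\mu',s}$. A Whittaker-type uniqueness argument, adapted from the classical Gelfand--Graev setting, then forces each non-zero $r_{\rM'}^{\rL'}\cY_{\rL',\mu,s}$ to consist of a single indecomposable summand, necessarily $\cY_{\rM',\mu,s}$ with $\mu$ identified with its restriction to $\rU_{\rM'}$. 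The main obstacle will be the orbit bookkeeping in this last step: one must check that restriction induces a map $\rO_{\rU'}(\rL')\to\rO_{\rU'}(\rM')$ whose fibre sizes combine with the multiplicity ratio $|\rZ(\rL):\rZ(\rL')|\,/\,|\rZ(\rM):\rZ(\rM')|$ in exactly the way required to match the two sides of the identity above, so that each individual $r_{\rM'}^{\rL'}\cY_{\rL',\mu,s}$ is either zero or $\cY_{\rM',\mu,s}$.
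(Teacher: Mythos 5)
There is a genuine gap, and you already flag it yourself: the Krull--Schmidt identity obtained by applying $r_{\rM'}^{\rL'}$ to Proposition~\ref{lemma a.1} tells you only that the \emph{collection} $\{r_{\rM'}^{\rL'}\cY_{\rL',\mu,s}\}_{\mu}$ decomposes, in total, into the expected multiplicity of indecomposable projectives $\cY_{\rM',\mu',s}$. Nothing in that counting identity prevents a single $r_{\rM'}^{\rL'}\cY_{\rL',\mu,s}$ from splitting into several summands or from being zero while another $\mu$ picks up the slack. You invoke ``a Whittaker-type uniqueness argument'' to pin down the per-$\mu$ conclusion, but that is precisely the nontrivial content that must be supplied, and it is not a formal consequence of the global identity. (There is also a smaller issue in the identity itself: when you re-apply Proposition~\ref{lemma a.1} at the $\rM,\rM'$ level, the index set $\mathcal{S}_{[s]}$ of $\rL^{\ast}$-classes does not obviously match the corresponding index set of $\rM^{\ast}$-classes---some $[\tilde{s}]$ may fail to meet $\rM^{\ast}$ and one must check that those are exactly the terms that Helm's convention kills.)

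What actually closes the argument in the paper is a Deligne--Lusztig computation at the $\overline{\mathcal{K}}$ level, namely Bonnaf\'e's Corollaries~15.15 and 15.11: Corollary~15.15 gives directly that $r_{\rM'}^{\rL'}\cY_{\rL',\mu,s}\otimes\overline{\mathcal{K}} \cong \cY_{\rM',\mu',s'}\otimes\overline{\mathcal{K}}$ (a single irreducible on the $\ell$-regular part), so the Geck--Hiss criterion (a projective lattice is indecomposable iff its $\ell$-regular quotient is) yields indecomposability of each $r_{\rM'}^{\rL'}\cY_{\rL',\mu,s}$; Corollary~15.11 then gives multiplicity one of the relevant constituent inside $\mathrm{res}_{\rM'}^{\rM}\cY_{\rM,\tilde{s}}\otimes\overline{\mathcal{K}}$, which identifies the two projective direct summands. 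Your proposal uses the same reduction to $\mathrm{GL}_n$ (the commutativity of restriction with parabolic restriction, plus Helm's Proposition~5.10), but it never obtains indecomposability of the individual summand, and the ``orbit bookkeeping'' you defer is exactly where the proof has to do real work via the Bonnaf\'e-type input.
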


\begin{proof}
In the proof of Proposition \ref{lemma a.1} we know that $\cY_{\rL',\mu,s}$ is a direct summand of $\mathrm{Res}_{\rL'}^{\rL}(\cY_{\rL,\tilde{s}})$. Observing that the unipotent radical of $\rM'$ is also the unipotent radical of $\rM$, we deduce directly from the definition that $r_{\rM'}^{\rL'} (\mathrm{res}_{\rL'}^{\rL}(\cY_{\rL,\tilde{s}})) = \mathrm{res}_{\rM'}^{\rM} (r_{\rM}^{\rL}(\cY_{\rL,\tilde{s}}))$, and Proposition 5.10 in \cite{Helm} states that $r_{\rM}^{\rL} (\cY_{\rL,\tilde{s}})= \cY_{\rM,\tilde{s}}$. The statements above, combining with the fact that parabolic restriction is exact and respects projectivity, implies that  $r_{\rM'}^{\rL'} \cY_{\rL',\mu,s}$ is a projective direct summand of $\mathrm{res}_{\rM'}^{\rM}\cY_{\rM, \tilde{s}}$. Suppose $[\tilde{s}] \cap \rM^{\ast}$ is empty, then the same for $[s] \cap \rM'^{\ast}$, hence we have $\cY_{\rM, \tilde{s}}= r_{\rM'}^{\rL} \cY_{\rL',\mu,s}=0$.

Now consider the second case. Suppose that $[\tilde{s}] \cap \rM^{\ast}$ is non-empty, which is a $\rM^{\ast}$-conjugacy class $[\tilde{s}']$ for $\tilde{s}'\in\rM^{\ast}$. Let $\mu'$ be the non-degenarate character $\mathrm{res}_{\rU_{\rL'}}^{\rU_{\rM'}} \mu$, where $\rU_{\rL'}$ and $\rU_{\rM'}$ denote the unipotent radical of $\rL$ and $\rM$ respectively. Corollary 15.15 in [Bon] gives an equation:
$$r_{\rM'}^{\rL'} \cY_{\rL', \mu, s} \otimes \overline{\mathcal{K}}= \cY_{\rM',\mu',s'} \otimes \overline{\mathcal{K}}.$$
which means the $\ell$-regular quotient (see Definition \ref{defn fin aa}) of $r_{\rM'}^{\rL'} \cY_{\rL', \mu, s}$ is indecomposable, which is equivalent to say that $r_{\rM'}^{\rL'} \cY_{\rL', \mu, s}$ is indecomposable by \cite[lemma 5.11 ]{Geck}. Notice that Corollary 15.11 in [Bon] tells that the sub-representation of $\mathrm{Res}_{\rM'}^{\rM}\cY_{\rM, \tilde{s}} \otimes \overline{\mathcal{K}}$ corresponding to $[s']$ has multiplicity one, and the equation above says that the irreducible sub-representation corresponding to $[s']$ of $r_{\rM'}^{\rL'} \cY_{\rL', \mu, s} \otimes \overline{\mathcal{K}}$ and $\cY_{\rM',\mu',s'} \otimes \overline{\mathcal{K}}$ coincide, hence these two projective direct summands of $\mathrm{Res}_{\rM'}^{\rM}\cY_{\rM, \tilde{s}}$ coincide each other.

\end{proof}

We move on to the second step of Theorem \ref{theorem 1}. The statement of step 2 is true for $\rL$, hence there only left the proposition below to finish our proof:

\begin{prop}
Let $\cY_{\rL', \mu, s}$, $\cY_{\rL,\tilde{s}}$, $\widetilde{\nu}$ be as in Proposition \ref{prop a}, then for any standard split Levi $\rM'$ of $\rL'$, we have $r_{\rM'}^{\rL'} \cY_{\rL',\mu, s}= \cY_{\rM', \mu, s}$ is cuspidal if and only if $r_{\rM}^{\rL} \cY_{\rL, \tilde{s}}= \cY_{\rM, \tilde{s}}$ is cuspidal.
\end{prop}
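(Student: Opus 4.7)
The plan is to translate each side of the biconditional into the same combinatorial condition --- namely, the emptiness of certain intersections $[\tilde{s}]\cap\rN^{\ast}$ and $[s]\cap\rN'^{\ast}$ --- and then use the surjection $i^{\ast}:\rG^{\ast}\twoheadrightarrow\rG'^{\ast}$ to match them. Cuspidality of $\cY_{\rM,\tilde{s}}$ as a $W(k)[\rM]$-module amounts to $r_{\rN}^{\rM}\cY_{\rM,\tilde{s}}=0$ for every proper standard Levi $\rN\subsetneq\rM$; by Proposition~5.10 of \cite{Helm} (already invoked in the proof of Proposition~\ref{prop b}) this is equivalent to $[\tilde{s}]\cap\rN^{\ast}=\emptyset$ for every such $\rN$. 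On the $\mathrm{SL}_n$ side, applying Proposition~\ref{prop b} to the standard Levi $\rN'$ of $\rL'$ and using transitivity of parabolic restriction, $r_{\rN'}^{\rL'}=r_{\rN'}^{\rM'}\circ r_{\rM'}^{\rL'}$, yields
\[
r_{\rN'}^{\rM'}\cY_{\rM',\mu,s}=r_{\rN'}^{\rL'}\cY_{\rL',\mu,s}=\cY_{\rN',\mu,s},
\]
which vanishes iff $[s]\cap\rN'^{\ast}=\emptyset$. Because the correspondence $\rN\mapsto\rN'=\rN\cap\rG'$ is a bijection between standard Levi subgroups of $\rM$ and $\rM'$ preserving the ``proper'' relation, the biconditional reduces to the purely group-theoretic claim
\[
[\tilde{s}]\cap\rN^{\ast}\neq\emptyset \iff [s]\cap\rN'^{\ast}\neq\emptyset.
\]

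The forward implication is immediate: if $\alpha\tilde{s}\alpha^{-1}\in\rN^{\ast}$ for some $\alpha\in\rG^{\ast}$, then $i^{\ast}(\alpha)\,s\,i^{\ast}(\alpha)^{-1}\in i^{\ast}(\rN^{\ast})=\rN'^{\ast}$. For the converse, I would choose $\beta\in\rG'^{\ast}$ with $\beta s\beta^{-1}\in\rN'^{\ast}$ and lift it to $\tilde{\beta}\in\rG^{\ast}$ using the surjectivity of $i^{\ast}$; then $\tilde{\beta}\tilde{s}\tilde{\beta}^{-1}$ maps into $\rN'^{\ast}$ and so belongs to $(i^{\ast})^{-1}(\rN'^{\ast})$. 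The task then reduces to verifying the identification $(i^{\ast})^{-1}(\rN'^{\ast})=\rN^{\ast}$.

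This last identification is the one genuine point requiring care, and I expect it to be the only real obstacle. It rests on the fact that $\ker(i^{\ast})$ is central in $\rG^{\ast}$ (in the concrete case $i^{\ast}:\mathrm{GL}_n\twoheadrightarrow\mathrm{PGL}_n$, the kernel is the full center of $\mathrm{GL}_n$) and that every standard Levi $\rN^{\ast}$ contains this center. Granting this, both $\rN^{\ast}\subseteq(i^{\ast})^{-1}(\rN'^{\ast})$ and the reverse containment (translate by an element of $\ker(i^{\ast})\subset\rN^{\ast}$) are automatic, closing the loop. Finally, the degenerate case $\cY_{\rM,\tilde{s}}=0$ --- occurring iff $[\tilde{s}]\cap\rM^{\ast}=\emptyset$, equivalently $[s]\cap\rM'^{\ast}=\emptyset$ by the same argument --- matches the vanishing of $\cY_{\rM',\mu,s}$ already recorded in Proposition~\ref{prop b}, and both modules are trivially cuspidal in that case.
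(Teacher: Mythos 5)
Your proposal is correct, but it routes the nontrivial implication differently from the paper. Both proofs reduce cuspidality (for all proper Levis) to the vanishing biconditional $r_{\rM'}^{\rL'}\cY_{\rL',\mu,s}=0 \iff r_{\rM}^{\rL}\cY_{\rL,\tilde{s}}=0$, and both identify the right-hand side with $[\tilde{s}]\cap\rM^{\ast}=\emptyset$ via Helm's Proposition~5.10. The divergence is in how each handles the implication ``$[s]\cap\rM'^{\ast}\neq\emptyset \Rightarrow [\tilde{s}]\cap\rM^{\ast}\neq\emptyset$''. The paper never proves this as a statement about conjugacy classes at all: it instead uses the module containment $r_{\rM'}^{\rL'}\cY_{\rL',\mu,s}\hookrightarrow\res_{\rM'}^{\rM}\cY_{\rM,\tilde{s}}$ from the proof of Proposition~\ref{prop b} (so $\cY_{\rM,\tilde{s}}=0$ forces the left-hand module to vanish), and for the reverse implication it passes through the $\ell$-regular quotient and Corollary~15.15 of Bonnaf\'e to identify the vanishing of $r_{\rM'}^{\rL'}\cY_{\rL',\mu,s}$ with $[s]\cap\rM'^{\ast}=\emptyset$ and then applies only the easy ``push forward along $i^{\ast}$'' direction. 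You instead prove the full combinatorial equivalence $[\tilde{s}]\cap\rN^{\ast}\neq\emptyset\iff[s]\cap\rN'^{\ast}\neq\emptyset$ directly, which forces you to verify that $(i^{\ast})^{-1}(\rN'^{\ast})=\rN^{\ast}$. That identification does hold here: at the level of $\mathbb{F}_q$-points, $i^{\ast}:\mathrm{GL}_n(\mathbb{F}_q)\to\mathrm{PGL}_n(\mathbb{F}_q)$ and its restrictions to standard Levis are surjective by Lang's theorem (the kernel $\mathbb{G}_m$ is connected), and the kernel consists of scalar matrices, which lie in every standard Levi. So your lifting argument (with $\beta\in\rL'^{\ast}$ rather than $\rG'^{\ast}$, since $[s]$ is an $\rL'^{\ast}$-class, and the lift $\tilde\beta$ taken inside $\rL^{\ast}$) closes as you expect. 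The trade-off: the paper's route leans on already-established module-theoretic facts and never has to reason about preimages of Levis, while yours makes the combinatorial mechanism explicit at the cost of an extra (true for $\mathrm{GL}_n/\mathrm{SL}_n$, but group-specific) lemma; for more general dual pairs the preimage identity could fail, which is presumably why the paper avoids it.
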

 
\begin{proof}
The regular inclusion $i$ induces a bijection preserving partial order between standard Levi subgroups of $\rG$ and $\rG'$, the statement in the proposition is equivalent to say that for any split Levi $\rM'$ of $\rL'$, 
$$r_{\rM'}^{\rL'} \cY_{\rL',\mu, s} =0 \iff r_{\rM}^{\rL} \cY_{\rL, \tilde{s}}=0.$$
The proof of Proposition \ref{prop b} tells us
$$r_{\rM'}^{\rL'} \cY_{\rL',\mu, s} \hookrightarrow \mathrm{res}_{\rM'}^{\rM} \cY_{\rM,\tilde{s}},$$
hence the direction $\Rightarrow$ is clear.

Now consider the other direction. Notice that $r_{\rM'}^{\rL'} \cY_{\rL',\mu,s}$ is an $\rO[\rM']$-lattice, and definition 5.9 in \cite{Geck} tells us that $r_{\rM'}^{\rL'} \cY_{\rL',\mu, s}=0$ if and only if its $\ell$-regular quotient $\pi_{l'} (r_{\rM'}^{\rL'}\cY_{\rL', \mu, s})=0.$ 
By \cite[Corollary 15.15]{Bonn}, the $\overline{\mathcal{K}}[\rM']$-module $(\pi_{l'} (r_{\rM'}^{\rL'} \cY_{\rL', \mu, s})) \otimes \overline{\mathcal{K}}$ is the sum of irreducible $\overline{\mathcal{K}}[\rM']$-submodules of $\mathrm{ind}_{\rU'_{\rM'}}^{\rM'} \mu$ corresponding to $[s] \cap \rM'^{\ast}$, where $[s]$ denotes the $\rL'^{\ast}$-conjugacy class. Hence $(\pi_{l'} (r_{\rM'}^{\rL'} \cY_{\rL', \mu, s})) \otimes \overline{\mathcal{K}}=0$ implies $[s] \cap \rM'^{\ast}=0$, which means $[\tilde{s}] \cap \rM^{\ast}=0$, and $\cY_{\rM,{\tilde{s}}}=0$.
 
\end{proof}

\section{$k$-representations of $p$-adic groups $\mathrm{SL}_n(F)$}
\label{section 4}
Let $F$ be a non-archimedean locally compact field of residual characteristic $p$, which is different from $\ell$. Let $\rM$ be a Levi subgroup of $\rG=\mathrm{GL}_n(F)$, we always denote by $\rM'$ the Levi subgroup of $\rG'$ such that $\rM\cap\rG'=\rM'$. Let $\pi$ be an irreducible cuspidal $k$-representation of $\rM'$, by \cite[Proposition 2.2]{TA} there exists an irreducible $k$-representation $\pi$ of $\rM$ such that $\pi'$ appears as a direct component of $\pi\vert_{\rM'}$, which is semisimple with finite length (a same proof as in \cite{TA} can be generalised to the case when $\ell$ is positive as explained in \cite{C}). Furthermore, any such $\pi$ is cuspidal, which follows from the fact that the unipotent radical of a Parabolic subgroup of $\rG$ lies in the kernel of the determinant function. The supercuspidal support of $\pi$ is unique up to $\rM$-conjugation (\cite{V2}). 

We prove in Section \ref{section2.2}, that the supercuspidal support of $\pi'$ is also unique up to $\rM$-conjugation (Proposition \ref{Lproposition 10}), which is the first description of supercuspidal support. We will first generalise the definition of $n$-th derivative given by Bernstein and Zelevinsky in \cite{BeZe} for complex representations of $\GL_n(F)$ to $k$-representations of Levi subgroups $\rM'$ of $\SL_n(F)$, which gives a link between the higgest derivative of an irreducible representation and that of its supercuspidal support. Then we deduce the uniqueness of supercuspidal support of irreducible $k$-representations of $\rM'$ in Theorem \ref{Uthm 3}., based on the result of Proposition \ref{Lproposition 10}, by considering its Whittaker model of a fixed non-degenerate character.

\subsection{First description of supercuspidal support}
\label{section2.2}

\begin{lem}
\label{Llemma 12}
Let $\pi$ be an irreducible $k$-representation. If $\pi\otimes\chi\circ\det$ is supercuspidal for a $k$-quasicharacter $\chi$ of $F^{\times}$, then $\pi$ is supercuspidal.
\end{lem}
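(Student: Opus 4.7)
The plan is to argue by contrapositive using the elementary fact that tensoring by a character of the form $\chi\circ\det$ is an exact auto-equivalence of the category of smooth $k$-representations which commutes, in a natural way, with normalised parabolic induction. Suppose, for contradiction, that $\pi$ is not supercuspidal. By Definition \ref{definition 000} there exist a proper Levi subgroup $\rM$ of $\rG$ and an irreducible $k$-representation $\sigma$ of $\rM$ such that $\pi$ occurs as a subquotient of $i_{\rM}^{\rG}\sigma$.

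The key structural point is that $\chi\circ\det$ is trivial on the unipotent radical $\rU$ of any parabolic subgroup of $\rG$, simply because unipotent matrices have determinant $1$. Combined with the fact that the modulus character is also trivial on $\rU$ and with Frobenius reciprocity for the inflation–induction adjunction, this yields a natural isomorphism of smooth $k$-representations of $\rG$
$$i_{\rM}^{\rG}(\sigma)\otimes(\chi\circ\det)\;\cong\;i_{\rM}^{\rG}\bigl(\sigma\otimes(\chi\circ\det)\vert_{\rM}\bigr).$$
Since $(\chi\circ\det)\vert_{\rM}$ is a character of $\rM$, the representation $\sigma\otimes(\chi\circ\det)\vert_{\rM}$ remains irreducible.

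Now twisting by $\chi\circ\det$ is exact and preserves subquotients. Applying it to the chain witnessing $\pi\hookrightarrow\text{subquotient of }i_{\rM}^{\rG}\sigma$ and using the above identification, we deduce that $\pi\otimes(\chi\circ\det)$ is a subquotient of $i_{\rM}^{\rG}(\sigma\otimes(\chi\circ\det)\vert_{\rM})$ with $\rM$ still a proper Levi subgroup. This contradicts the supercuspidality of $\pi\otimes(\chi\circ\det)$.

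No genuine obstacle arises; the only point requiring a short formal verification is the compatibility of $i_{\rM}^{\rG}$ with twisting by $\chi\circ\det$, which follows at once from the definition of normalised parabolic induction and the triviality of $\chi\circ\det$ on the unipotent radical.
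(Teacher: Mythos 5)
Your proof is correct and takes essentially the same approach as the paper: both argue by contrapositive and rest entirely on the observation that $\chi\circ\det$ is trivial on the unipotent radical of any parabolic, so twisting by $\chi\circ\det$ commutes with normalised parabolic induction and hence preserves the property of being a subquotient of $i_{\rM}^{\rG}\sigma$ for a proper Levi $\rM$. The paper phrases the hypothesis in terms of a supercuspidal $\tau$ rather than an arbitrary irreducible $\sigma$ and cites \cite[\S I,5.2,d)]{V1} for the compatibility of twisting with induction, but these are only cosmetic differences.
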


\begin{proof}
Assume that there is a supercuspidal representation $\tau$ of a proper Levi $\rL$ of $\rM$ such that $\pi$ is an irreducible subquotient of $i_{\rL}^{\rM}\tau$. Then $\pi\otimes\chi\circ\det$ is a subquotient of $i_{\rL}^{\rM}\tau\otimes\chi\circ\det$, which follows from the equivalence 
$$i_{\rL}^{\rM}\tau\otimes\chi\circ\det\cong (i_{\rL}^{\rM}\tau)\otimes\chi\circ\det.$$
The above equivalence is obtained from [\S I,5.2,d)]\cite{V1}, by noticing that for any parabolic subgroup containing $\rL$, its unipotent radical is a subset of the kernel of the determinant function.
\end{proof}

\begin{lem}
\label{Llemma 11}
Let $\pi'$ be an irreducible cuspidal $k$-representation of $\rM'$, and $\pi$ an irreducible $k$-representation of $\rM$ containing $\pi'$. Then $\pi'$ is supercuspidal if and only if $\pi$ is supercuspidal.
\end{lem}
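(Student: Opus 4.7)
The plan is to transfer (super)cuspidality across the inclusion $\rM' \subset \rM$ by combining two ingredients. The first is a Mackey-type identity
\[
\res_{\rM'}^{\rM} \circ\, i_\rL^\rM \;\cong\; i_{\rL'}^{\rM'} \circ\, \res_{\rL'}^{\rL},
\]
where $\rL$ is the Levi of $\rM$ with $\rL \cap \rM' = \rL'$. I would verify it by noting that the unipotent radical $\rU$ of the standard parabolic $\rP = \rL\rU$ lies in $\rG'$ (unipotents have trivial determinant), so $\rP \cap \rM' = \rL'\rU$ and $\rM = \rP\rM'$, while the modulus characters match because $\rL$ and $\rL'$ act identically on $\rU$. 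The second ingredient is the Clifford-type dichotomy for $(\rM,\rM')$ from \cite{TA}, extended to $k$-coefficients in \cite{C}: two irreducible $k$-representations of $\rM$ share a direct summand on restriction to $\rM'$ if and only if they differ by a twist $\chi\circ\det$ for some quasicharacter $\chi$ of $F^\times$. Note also that $\rL$ is proper in $\rM$ if and only if $\rL'$ is proper in $\rM'$.

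For the easy direction ($\pi'$ supercuspidal $\Rightarrow$ $\pi$ supercuspidal), I would suppose $\pi$ is a subquotient of $i_\rL^\rM \tau$ for a proper Levi $\rL$ and irreducible $\tau$ of $\rL$. Restricting to $\rM'$ via the Mackey identity, $\pi|_{\rM'}$ becomes a subquotient of $i_{\rL'}^{\rM'}(\tau|_{\rL'}) = \bigoplus_j i_{\rL'}^{\rM'} \tau'_j$, where the $\tau'_j$ run through the irreducible components of the semisimple finite-length representation $\tau|_{\rL'}$. Since $\pi'$ is a direct summand of $\pi|_{\rM'}$, it must appear as a subquotient of some $i_{\rL'}^{\rM'} \tau'_j$, contradicting supercuspidality of $\pi'$.

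For the converse, I would assume $\pi'$ is a subquotient of $i_{\rL'}^{\rM'} \tau'$ for some proper $\rL'$ and irreducible $\tau'$. Lifting $\tau'$ to an irreducible $k$-representation $\tau$ of the corresponding Levi $\rL$ via \cite[Proposition 2.2]{TA}, the Mackey identity shows that $\pi'$ is a subquotient of $(i_\rL^\rM \tau)|_{\rM'}$. Passing to a Jordan-Hölder filtration of $i_\rL^\rM \tau$ and restricting, $\pi'$ must be a composition factor of $\sigma|_{\rM'}$ for some irreducible constituent $\sigma$ of $i_\rL^\rM \tau$; by semisimplicity of $\sigma|_{\rM'}$ it is in fact a direct summand. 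The Clifford dichotomy then furnishes a quasicharacter $\chi$ of $F^\times$ with $\sigma \cong \pi \otimes \chi\circ\det$. Since $\sigma$ is a non-supercuspidal subquotient of $i_\rL^\rM \tau$ (as $\rL$ is proper), writing $\pi = \sigma \otimes \chi^{-1}\circ\det$ and applying Lemma \ref{Llemma 12} contradicts the supercuspidality of $\pi$.

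The hard part will be ingredient (b): the semisimplicity of $\pi|_{\rM'}$ and the identification of its summands with a single orbit under $\chi\circ\det$-twists rest on the modular generalisation of Tadić's restriction theorem discussed in \cite{C}, and must be invoked carefully in the Jordan-Hölder chase above. The Mackey identity (a) is routine but deserves explicit verification because of the modulus-character bookkeeping; once both are in place, the argument is the pair of short chases above.
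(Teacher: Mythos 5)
Your ``easy direction'' reproduces the paper's argument: the Mackey identity for $\res_{\rM'}^{\rM}\circ i_\rL^\rM$, followed by the observation that $\pi'$, as a summand of $\pi\vert_{\rM'}$, must then be a subquotient of some $i_{\rL'}^{\rM'}\tau'_j$ with $\rL'$ proper, contradicting supercuspidality of $\pi'$. For the converse, however, you supply an argument the paper actually omits: as written, the paper's proof establishes only one implication --- it assumes $\pi$ non-supercuspidal, applies Mackey, and deduces that $\pi'$ is non-supercuspidal, which is precisely the contrapositive of your easy direction --- and never addresses the reverse. Your Jordan--H\"older/Clifford chase is a correct way to close that gap. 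Two points worth recording explicitly when writing it up: Proposition~\ref{Lproposition 9} applies only to cuspidal representations of $\rM$, so you should note that the constituent $\sigma$ of $i_\rL^\rM\tau$ is automatically cuspidal --- since $\sigma\vert_{\rM'}$ contains the cuspidal $\pi'$ as a summand, Clifford theory for the normal pair $\rM'\trianglelefteq\rM$ makes every summand an $\rM$-conjugate of $\pi'$ and hence cuspidal, whence $r_{\rL'}^{\rM'}\res_{\rM'}^{\rM}\sigma\cong\res_{\rL'}^{\rL}r_\rL^\rM\sigma=0$ forces $r_\rL^\rM\sigma=0$; and the finite length of $i_\rL^\rM\tau$ needed for the Jordan--H\"older step should be invoked (it holds in the $\ell$-modular setting for Levi subgroups of $\GL_n$). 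With those noted, your proposal actually proves both directions of the stated equivalence, whereas the paper's proof covers only one.
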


\begin{proof}
Let $\rL$ be a Levi subgroup of $\rM$, and $\rL'=\rL\cap\rM'$. We have $\res_{\rL'}^{\rL}r_{\rL}^{\rM}\pi\cong r_{\rL'}^{\rM'}\res_{\rM'}^{\rM}\pi$, while $\res_{\rM'}^{\rM}\pi$ is a direct sum of $\rM$-conjugations of $\pi'$, hence the later one is zero, and we obtain that $\pi$ is cuspidal.

We assume that $\pi$ is non-supercuspidal, which means there exists a supercupidal representation $\tau$ of a proper Levi subgroup $\rL$ of $\rM$, the representation $\pi$ is a subquotient of the parabolic induction $i_{\rL}^{\rM}\tau$. Now by \S $5.2$ \cite{BeZe}, we obtain:
$$\res_{\rM'}^{\rM}i_{\rL}^{\rM}\tau\cong\ i_{\rL'}^{\rM'}\res_{\rL'}^{\rL}\tau.$$
There must be a direct component $\tau'$ of $\res_{\rL'}^{\rL}\tau$, and $\pi'$ be an irreducible subquotient of $i_{\rL'}^{\rM'}\tau'$. Hence $\pi'$ is not supercuspidal, which contradicts with the assumption.

\end{proof}

\cite[Corollary 2.5]{TA} can be generalised to $k$-representations. We write this proposition here for convinient

\begin{prop}
\label{Lproposition 9}
Let $\pi'$ be an irreducible cuspidal $k$-representation of $\rM'$. If $\pi_1,\pi_2$ two irreducible cuspidal $k$-representations of $\rM$, such that $\pi'$ appears as a direct component of $\res_{\rM'}^{\rM}\pi_1$ and $\res_{\rM'}^{\rM}\pi_2$ in common, then there exists a $k$-quasicharacter of $F^{\times}$ verifying that $\pi_1\cong \pi_2\otimes\chi\circ\det$.
\end{prop}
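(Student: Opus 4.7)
The plan is to realize both $\pi_1$ and $\pi_2$ as irreducible quotients of the compactly induced representation $\ind_{\rM'}^{\rM}\pi'$ and then use Clifford theory to show that the set of isomorphism classes of such quotients is a single orbit under the twisting action $\sigma \mapsto \sigma \otimes (\chi\circ\det)$ of the group of $k$-quasicharacters of $F^{\times}$; applied to $\pi_1$ and $\pi_2$ this immediately produces the desired $\chi$.

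First I would apply Frobenius reciprocity to the hypothesis that $\pi'$ is a direct summand of $\res_{\rM'}^{\rM}\pi_i$: this yields a nonzero, hence surjective, $\rM$-homomorphism $\ind_{\rM'}^{\rM}\pi' \twoheadrightarrow \pi_i$ for each $i=1,2$. Second, since $\rM'$ lies in the kernel of the total determinant, every $k$-quasicharacter $\chi$ of $F^{\times}$ satisfies $(\chi\circ\det)|_{\rM'}=1$, and the projection formula then gives $\ind_{\rM'}^{\rM}\pi' \otimes (\chi\circ\det) \cong \ind_{\rM'}^{\rM}\pi'$, so the twist $\sigma \mapsto \sigma\otimes(\chi\circ\det)$ permutes the set of isomorphism classes of irreducible quotients of $\ind_{\rM'}^{\rM}\pi'$. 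Third, for transitivity I would invoke Clifford theory for the normal inclusion $\rM' \triangleleft \rM$ with abelian quotient $\rM/\rM' \cong F^{\times}$: the cuspidality of $\pi_i$ together with \cite[Proposition 2.2]{TA} and its $\ell$-modular extension \cite{C} ensures that $\res_{\rM'}^{\rM}\pi_i$ is semisimple of finite length, so the stabilizer $\rM_{\pi'}$ of $\pi'$ in $\rM$ has finite index in $\rM$; every irreducible quotient of $\ind_{\rM'}^{\rM}\pi'$ is then of the form $\ind_{\rM_{\pi'}}^{\rM}\widetilde{\pi'}$ for an extension $\widetilde{\pi'}$ of $\pi'$, any two such extensions differ by a smooth character of $\rM_{\pi'}/\rM'$, and any such character lifts to a $k$-quasicharacter of $F^{\times}$; inducing converts this difference into a twist by $\chi\circ\det$.

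The main obstacle will be ensuring that the Clifford-theoretic arguments go through cleanly over coefficients in $k$ with $\mathrm{char}(k)=\ell\neq p$: namely, the existence of the extension $\widetilde{\pi'}$ of $\pi'$ to $\rM_{\pi'}$ (a cohomological statement that becomes transparent because $k$ is algebraically closed) and the lifting of smooth characters from the closed subgroup $\rM_{\pi'}/\rM'$ to $F^{\times} \cong \varpi^{\mathbb{Z}} \times \fo_F^{\times}$. Both should follow from standard structural facts about locally profinite abelian groups, but together they form the technical heart of the argument.
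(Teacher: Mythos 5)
The paper gives no proof of this proposition---it simply records that Tadi\'c's Corollary~2.5 (for complex representations restricted from $\mathrm{GL}_n(F)$ to $\mathrm{SL}_n(F)$) ``can be generalised to $k$-representations''---so your sketch has to stand on its own, and there is a genuine gap in its third step. You assert that every irreducible quotient of $\ind_{\rM'}^{\rM}\pi'$ has the form $\ind_{\rM_{\pi'}}^{\rM}\widetilde{\pi'}$ for an \emph{extension} $\widetilde{\pi'}$ of $\pi'$, and that the existence of such an extension is ``a cohomological statement that becomes transparent because $k$ is algebraically closed.'' It is not transparent. The obstruction to extending $\pi'$ across $\rM'\triangleleft\rM_{\pi'}$ lives in $H^2(\rM_{\pi'}/\rM',k^\times)$, and algebraic closedness of $k$ does not force this group to vanish (for instance $H^2((\mathbb Z/m)^2,k^\times)\cong\mathrm{Hom}(\mathbb Z/m,k^\times)\neq 0$ whenever $\gcd(m,\ell)=1$ and $m>1$). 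Equivalently, your claim presupposes multiplicity one for $\pi'$ inside $\pi_1|_{\rM'}$: over $\mathbb{C}$ that is a theorem of Tadi\'c, and its $\ell$-modular analogue is a substantive statement needing its own argument, not a formality. Without it, the $\pi'$-isotypic component of $\pi_1$ is an irreducible $\rM_{\pi'}$-module whose restriction to $\rM'$ is $\pi'^{\oplus m}$ with $m$ possibly greater than one, and the extension-plus-Mackey route does not close.

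You can bypass both the cocycle and the multiplicity questions. Form $\mathrm{Hom}_{k[\rM']}(\pi_1,\pi_2)$; it is nonzero by the hypothesis together with the semisimplicity of the restrictions already recorded in the paper, and finite-dimensional because the restrictions have finite length. The group $\rM$ acts on it by $m\cdot T=\pi_2(m)\,T\,\pi_1(m)^{-1}$, and $\rM'$ acts trivially, so the abelian group $\rM/\rM'\cong F^\times$ acts on a nonzero finite-dimensional $k$-vector space; since $k$ is algebraically closed there is a common eigenvector $T$ with eigencharacter of the form $\chi^{-1}\circ\det$ for some character $\chi$ of $F^\times$. That $\chi$ is smooth follows at once: choose $v$ with $Tv\neq 0$ and an open compact $K\subset\rM$ fixing both $v$ and $Tv$; then $\chi\circ\det$ is trivial on $K$, and $\det(K)$ is open in $F^\times$. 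Finally $T$ is a nonzero $\rM$-homomorphism $\pi_1\to\pi_2\otimes\chi\circ\det$, hence an isomorphism of irreducibles. This shorter argument uses algebraic closedness exactly where it is needed (existence of a common eigenvector) rather than where you invoked it.
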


\begin{prop}
\label{Lproposition 10}
Let $\pi'$ be an irreducible cuspidal $k$-representation of $\rM'$, and $\pi$ an irreducible cuspidal $k$-representation of $\rM$ such that $\pi$ contains $\pi'$. Let $[\rL,\tau]$ be the supercuspidal support of $\pi$, where $\rL$ is a Levi subgroup of $\rM$ and $\tau$ an irreducible supercuspidal $k$-representation of $\rL$. Let $\tau'$ be an arbitrary direct component of $\res_{\rL'}^{\rL}\tau$. A supercuspidal pair belonging to the supercuspidal support of $\pi'$ is $\rM$-conjugated to $(\rL',\tau')$.
\end{prop}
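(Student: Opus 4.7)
The plan is to lift a supercuspidal pair $(\rL^*,\tau^*)$ in the supercuspidal support of $\pi'$ to a supercuspidal pair of $\rM$, invoke the known uniqueness of supercuspidal support for $\mathrm{GL}_n(F)$ from \cite{V2}, and then descend back to $\rM'$. To lift, I first pick a Levi subgroup $\rL^{\sharp}$ of $\rM$ with $\rL^{\sharp}\cap\rG'=\rL^*$ (any Levi of $\rM'$ arises this way, since Levi subgroups of $\mathrm{SL}_n$ are intersections of block-diagonal Levi subgroups of $\mathrm{GL}_n$ with $\rG'$). By \cite[Proposition 2.2]{TA} (recalled at the start of Section \ref{section 4}), there is an irreducible cuspidal $k$-representation $\tau^{\sharp}$ of $\rL^{\sharp}$ containing $\tau^*$ as a direct summand of $\res_{\rL^*}^{\rL^{\sharp}}\tau^{\sharp}$, and Lemma \ref{Llemma 11} applied to the pair $(\rL^{\sharp},\rL^*)$ forces $\tau^{\sharp}$ to be supercuspidal.

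Next I want to recognise which Jordan-H\"older factor of $i_{\rL^{\sharp}}^{\rM}\tau^{\sharp}$ restricts to contain $\pi'$. Using the restriction-induction identity $\res_{\rM'}^{\rM}i_{\rL^{\sharp}}^{\rM}\tau^{\sharp}\cong i_{\rL^*}^{\rM'}\res_{\rL^*}^{\rL^{\sharp}}\tau^{\sharp}$ (as invoked in the proof of Lemma \ref{Llemma 11}) together with the fact that $\tau^*$ is a summand of $\res_{\rL^*}^{\rL^{\sharp}}\tau^{\sharp}$ and that $\pi'$ is a subquotient of $i_{\rL^*}^{\rM'}\tau^*$, I deduce that $\pi'$ appears as a subquotient of $\res_{\rM'}^{\rM}i_{\rL^{\sharp}}^{\rM}\tau^{\sharp}$. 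Exactness of $\res_{\rM'}^{\rM}$ together with the semisimplicity of the restriction of an irreducible $\rM$-representation then produces an irreducible subquotient $\sigma_j$ of $i_{\rL^{\sharp}}^{\rM}\tau^{\sharp}$ such that $\pi'$ is a direct summand of $\res_{\rM'}^{\rM}\sigma_j$. By Lemma \ref{Llemma 11}, $\sigma_j$ is cuspidal; by Proposition \ref{Lproposition 9}, $\sigma_j\cong\pi\otimes(\chi\circ\det)$ for some $k$-quasicharacter $\chi$ of $F^{\times}$; and $(\rL^{\sharp},\tau^{\sharp})$ therefore belongs to the supercuspidal support of $\pi\otimes(\chi\circ\det)$.

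The descent step uses \cite{V2}: the supercuspidal support of $\pi\otimes(\chi\circ\det)$ is the $\rM$-conjugacy class of $(\rL,\tau\otimes(\chi\circ\det_{\rL}))$, so there exists $m\in\rM$ with $m\rL^{\sharp}m^{-1}=\rL$ and $m\cdot\tau^{\sharp}\cong\tau\otimes(\chi\circ\det_{\rL})$. Intersecting $m\rL^{\sharp}m^{-1}=\rL$ with $\rG'$ yields $m\rL^*m^{-1}=\rL'$. Because $\chi\circ\det_{\rL}$ is trivial on $\rL'\subset\rG'$, restriction to $\rL'$ gives $\res_{\rL'}^{\rL}(m\cdot\tau^{\sharp})\cong\res_{\rL'}^{\rL}\tau$, so $m\cdot\tau^*$ is one of the direct summands of $\res_{\rL'}^{\rL}\tau$. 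Since $\rL'$ is normal in $\rL$, Clifford theory shows that all such summands form a single $\rL$-orbit, so there is $\ell\in\rL$ with $\ell^{-1}m\cdot\tau^*\cong\tau'$ and $(\ell^{-1}m)\rL^*(\ell^{-1}m)^{-1}=\rL'$, which exhibits the desired $\rM$-conjugation from $(\rL^*,\tau^*)$ to $(\rL',\tau')$.

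The main obstacle is the book-keeping around the twist by $\chi\circ\det$: one needs Lemma \ref{Llemma 11} to transport cuspidality from $\pi'$ up to $\sigma_j$ and then Proposition \ref{Lproposition 9} to recognise $\sigma_j$ as such a twist of $\pi$, so that the twist collapses upon restriction to $\rL'$ and does not contaminate $\tau'$. Once this is in place, the final step is just the observation that the components of $\res_{\rL'}^{\rL}\tau$ form a single $\rL$-orbit.
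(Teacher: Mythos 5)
Your proof is correct and follows essentially the same route as the paper: lift the supercuspidal pair $(\rL^*,\tau^*)$ of $\rM'$ to a supercuspidal pair of $\rM$ via \cite[Prop.\ 2.2]{TA} and Lemma \ref{Llemma 11}, use the restriction--induction identity to find an irreducible subquotient $\sigma_j$ of the corresponding induced representation containing $\pi'$, apply Proposition \ref{Lproposition 9} to write $\sigma_j \cong \pi\otimes(\chi\circ\det)$, invoke uniqueness of supercuspidal support for $\rM$ from \cite{V2}, and note that the twist by $\chi\circ\det$ collapses on $\rL'$. The only difference is cosmetic: you make explicit the conjugating element $m$ and the final appeal to Clifford theory that the summands of $\res_{\rL'}^{\rL}\tau$ form a single $\rL$-orbit, which the paper leaves implicit in the phrase ``we may assume.''
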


\begin{proof}
Let $\rL_0'$ be a Levi subgroup of $\rM'$ and $\tau_0'$ an irreducible supercuspidal $k$-representation of $\rL_0'$. Let $\tau_0$ be an irreducible $k$-representation of $\rL_0$ containing $\tau_0'$, hence $\tau_0$ is supercuspidal as in Lemma \ref{Llemma 11}.

Now suppose that $\pi'$ is an irreducible subquotient of $i_{\rL_0'}^{\rM'}\tau_0'$. 
 By the same reason as in the proof of Lemma \ref{Llemma 11}, we know that there must be an irreducible subquotient of $i_{\rL_0}^{\rM}\tau_0$, noted as $\pi_0$, such that $\pi'$ is a direct component of $\res_{\rM'}^{\rM}\pi_0$. From \cite[Corollary 2.5]{TA}, there exists a $k$-quasicharacter $\chi$ of $F^{\times}$ such that $\pi_0\cong\pi\otimes\chi\circ\det$. On the other hand, the supercuspidal support of $\pi\otimes\chi\circ\det$ is the $\rM$-conjugacy class of $(\rL,\tau\otimes\chi\circ\det)$. Hence we may assume that $\rL_0=\rL$ and $\tau_0\cong\tau\otimes\chi\circ\det$, and deduce that $\tau_0'$ is a direct component of $\res_{\rL'}^{\rL}\tau\otimes\chi\circ\det\cong\res_{\rL'}^{\rL}\tau$.
\end{proof}

\subsection{The $n$-th derivative and parabolic induction}
\label{subsubsection 5.1.1}
This section is a direct generalisation of the part of derivatives given in \cite{BeZe} for $\mathrm{GL}_n$. Neverthless, except the convenience reason, we write this section because the author believe the notation system for $\rG'$ is worthy to be introduced, which is different and complicate compared to that of $\mathrm{GL}_n$. The complexity arises from the fact that the Levi subgroups $\rM'$ can not be written in the form of a product of $\mathrm{SL}$ groups in lower rank, so a method of recursion can not be applied here.

Let $n_1,\ldots,n_m$ be a family of integers, and let $\rM_{n_1,\ldots,n_m}$ be the product $\mathrm{GL}_{n_1}\times\cdots\times\mathrm{GL}_{n_m}$, which can be canonically embedded into $\mathrm{GL}_{n_1+\cdots+n_m}$. Let $\rM_{n_1,\ldots,n_m}'$ be $\rM_{n_1,\ldots,n_m}\cap\mathrm{SL}_{n_1+\cdots+n_m}$, and $P_{n_1}$ the mirabolic subgroup of $\mathrm{GL}_{n_1}$. For any $i\in\{1,\ldots,m\}$, let $\mathrm{U}_{n_i}$ be the subset of $\mathrm{GL}_{n_i}$, consisted with upper-triangular matrix with $1$ on the diagonal. We denote $\rU_{n_1,\ldots,n_m}=\rU_{n_1}\times\cdots\times\rU_{n_m}$ by  $\rU_{n_1,\ldots,n_m}$.

\begin{defn}
\label{ddef 1}
Let $n_1,\ldots,n_m$ be a family of positive integers, and $s\in\{1,\ldots,m\}$. We define:
\begin{itemize}
\item the mirabolic subgroup at place $s$ of $\rM_{n_1,\ldots,n_m}$, as $P_{(n_1,\ldots,n_m),s}=\mathrm{GL}_{n_1}\times\cdots\times\mathrm{GL}_{n_{s-1}}\times P_{n_s}\times\mathrm{GL}_{n_{s+1}}\times\cdots\times\mathrm{GL}_{n_m}$;
\item the mirabolic subgroup at place $s$ of $\rM_{n_1,\ldots,n_m}'$, as $P_{(n_1,\ldots,n_m),s}'=\mathrm{GL}_{n_1}\times\cdots\times\mathrm{GL}_{n_{s-1}}\times P_{n_s}\times\mathrm{GL}_{n_{s+1}}\times\cdots\times\mathrm{GL}_{n_m}\cap\rM_{n_1,\ldots,n_m}'$.
\end{itemize}
\end{defn}

We fix $\theta_{i}$ a non-degenerate character of $\rU_{n_i}$. It is clear that $\rU_{n_1,\ldots,n_m}$ is a subgroup of $P_{(n_1,\ldots,n_m),s}$ and $P_{(n_1,\ldots,n_m),s}'$ for any $s\in\{1,\ldots,m\}$. Let $V_{n_s-1}$ be the additive group of $k$-vector space with dimension $n_s-1$, which can be embedded canonically as a normal subgroup in $\rU_{n_1}\times\cdots\times\rU_{n_m}$. The subgroup $V_{n_s-1}$ is normal both in $P_{(n_1,\ldots,n_m),s}$ and $P_{(n_1,\ldots,n_m),s}'$, furthermore, we have $P_{(n_1,\ldots,n_m),s}=\rM_{n_1,\ldots,n_s-1,\ldots,n_m}\cdot V_{n_s-1}$ and $P_{(n_1,\ldots,n_m),s}'=\rM_{n_1,\ldots,n_s-1,\ldots,n_m}'\cdot V_{n_s-1}$. Under the notation system as explained above, the notation $\rM_{0}$ is meaningful in this section, which denotes a subgroup of $P_{(1),1}$, hence is the trivial group.

Denote $\theta$ a $k$-character of $\rU_{n_1}\times\cdots\times\rU_{n_m}$. For any $k$-representation $(E,\rho)$ inside $\mathrm{Rep}_k(P_{(n_1,\ldots,n_m),s}')$, where $E$ is the representation space of $\rho$, let $E_{s,\theta}$ denote the subspace of $E$ generated by elements in form of $\rho(g)a-\theta(g)a$, where $g\in V_{n_s-1},a\in E$. We define the coinvariants of $(E,\rho)$ according to $\theta$ as $E/E_{s,\theta}$, and denote it as $E(\theta,s)$, and view $E(\theta,s)$ as a $k$-representation of $\rM_{n_1,\ldots,n_s-1,\ldots,n_m}'$.

\begin{defn}
\label{ddef 2}
Fix a non-degenerate character $\theta$ of $\rU_{n_1}\times\cdots\times\rU_{n_m}$. 
\begin{itemize}
\item Let $(E,\rho)\in\mathrm{Rep}_k(P_{(n_1,\ldots,n_m),s}')$, define
$$\Psi_{s}^{-}:\mathrm{Rep}_k(P_{(n_1,\ldots,n_m),s}')\rightarrow\mathrm{Rep}_k(\rM_{n_1,\ldots,n_s-1,\ldots,n_m}'),$$
the canonical projection from $E$ to $E(\mathds{1},s)$;
\item Let $(E,\rho)\in\mathrm{Rep}_k(\rM_{n_1,\ldots,n_s-1,\ldots,n_m}')$, and write $P_{(n_1,\ldots,n_m),s}'=\rM_{n_1,\ldots,n_s-1,\ldots,n_m}' \cdot V_{n_s-1}$. Define
$$\Psi_{s}^{+}:\mathrm{Rep}_k(\rM_{n_1,\ldots,n_s-1,\ldots,n_m}')\rightarrow\mathrm{Rep}_k(P_{(n_1,\ldots,n_m),s}'),$$
which maps $(E,\rho)$ to $(E,\Psi^{+}_{s}(\rho))$ by $\Psi_{s}^{+}(\rho)(mg)(a)=\rho(m)(a)$, for any $m\in\rM_{n_1,\ldots,n_s-1,\ldots,n_m}', g\in V_{n_s-1}$ and $a\in E$. 
\item Let $(E,\rho)\in\mathrm{Rep}_k(P_{(n_1,\ldots,n_m),s}')$, define
$$\Phi_{\theta,s}^{-}:\mathrm{Rep}_k(P_{(n_1,\ldots,n_m),s}')\rightarrow\mathrm{Rep}_k(P_{(n_1,\ldots,n_s-1,\ldots,n_m),s}'),$$
which maps $E$ to $E(\theta,s)$, and consider the restricted representation from $\rM_{n_1,\ldots,n_s-1,\ldots,n_m}'$ to $P_{(n_1,\ldots,n_s-1,\ldots,n_m),s}'$;
\item Let $(E,\rho)\in\mathrm{Rep}_k(P_{(n_1,\ldots,n_s-1,\ldots,n_m),s}')$. Consider the composed canonical embedding 
$$ P_{(n_1,\ldots,n_s-1,\ldots,n_m),s}' \hookrightarrow \rM_{(n_1,\ldots,n_s-1,\ldots,n_m)} \hookrightarrow P_{(n_1,\ldots,n_m),s}',$$
and denote $(E,\rho_{\theta})$ the $k$-representation of $P_{(n_1,\ldots,n_s-1,\ldots,n_m),s}'\cdot V_{n_s-1}$ under the above embedding, such that $\rho_{\theta}(pg)=\theta(g)\rho(p)$, for any $p\in P_{(n_1,\ldots,n_s-1,\ldots,n_m),s}',g\in V_{n_s-1}$. Define
$$\Phi_{\theta,s}^{+}:\mathrm{Rep}_k(P_{(n_1,\ldots,n_s-1,\ldots,n_m),s}')\rightarrow\mathrm{Rep}_k(P_{(n_1,\ldots,n_m),s}'),$$
by taking $\Phi_{\theta,s}^{+}(\rho)=\ind_{P_{(n_1,\ldots,n_s-1,\ldots,n_m),s}'\cdot V_{n_s-1}}^{P_{(n_1,\ldots,n_m),s}'}\rho_{\theta}$.
\end{itemize}
\end{defn}

\begin{rem}
\label{drem 3}
By the reason that for any $m\in\mathbb{Z}$ the group $V_m$ is a limite of pro-$p$ open compact subgroups, the four functors defined above are exact.
\end{rem}

The notion of derivatives is well defined for $k$-representations of $\rG$, now we consider the parallel operator of derivatives for Levi subgroups of $\rG'$.
\begin{defn}
\label{ddef 4}
Fix a non-degenerate character $\theta$ of $\rU_{n_1}\times\cdots\times\rU_{n_m}$. Let $(E,\rho)\in\mathrm{Rep}_k(P_{(n_1,\ldots,n_m,s)}')$, for any interger $s\in\{1,\ldots,m\}$ and $1\leq d\leq n_1+\ldots+n_s$, we define the derivative $\rho_{\theta,s}^{(d)}$:
\begin{itemize}
\item when $1\leq d\leq n_s$, define 
$$\rho_{\theta,s}^{(d)}=\Psi_s^{-}\circ(\Phi_{\theta,s}^-)^{d-1}\rho: \mathrm{Rep}_k(P_{(n_1,\ldots,n_m),s}')\rightarrow \mathrm{Rep}_k(\rM_{(n_1,\ldots,n_s-d,\ldots,n_m),s}');$$
\item when $n_s+1\leq d=n_s+\ldots+n_{s-l}+n'$, where $0\leq l\leq s-1$ and $1\leq n'\leq n_{s-l-1}$, then $\rho_{\theta,s}^{(d)}=\Psi_{s-l-1}^{-}\circ(\Phi_{\theta,s-l-1})^{n'-1}\circ(\Phi_{\theta,s-l})^{n_{s-l}-1}\circ\ldots\circ(\Phi_{\theta,s}^-)^{n_s-1}\rho$, and
$$\rho_{\theta,s}^{(d)}: \mathrm{Rep}_k(P_{(n_1,\ldots,n_m),s}')\rightarrow \mathrm{Rep}_k(\rM_{n_1,\ldots,n_{s-l-1}-n',n_{s+1},\ldots,n_m}).$$
\end{itemize}
\end{defn}

\begin{defn}
\label{ddef 5}
Suppose $m$ is bigger than $2$. To simplify our notations, we introduce $\ind_{m}^{m-1}:\mathrm{Rep}_{k}(\rG_1)\rightarrow\mathrm{Rep}_k(\rG_2)$ according to different cases:
\begin{itemize}
\item When $\rG_1=\rM_{n_1,\ldots,n_m}'$ and $\rG_2=\rM_{n_1,\ldots,n_{m-1}+n_m}'$, we embed $\rG_1$ into $\rG_2$ as in the figure case $\mathrm{I}$, and $\ind_{m}^{m-1}$ is defined as $i_{\rU,1}$, and the later one is defined as in \S$1.8$ of \cite{BeZe};
\item When $\rG_1=P_{(n_1,\ldots,n_m),m}'$ and $\rG_2=P_{(n_1,\ldots,n_{m-1}+n_m),m-1}'$, we embed $\rG_1$ into $\rG_2$ as in the figure case $\mathrm{II}$, and $\ind_{m}^{m-1}$ is defined as $i_{\rU,1}$;
\item When $\rG_1=P_{(n_1,\ldots,n_m),m-1}'$ and $\rG_2=P_{(n_1,\ldots,n_{m-1}+n_m),m-1}'$, we embed $\rG_1$ into $\rG_2$ as in the figure case $\mathrm{III}$, and $\ind_{m}^{m-1}$ is defined as $i_{\rU,1}\circ\varepsilon$. Here $\varepsilon$ is a character of $P_{(n_1,\ldots,n_m),m-1}'$. Write $g\in P_{(n_1,\ldots,n_m),m-1}'\subset\rM_{n_1,\ldots,n_m}'$ as $(g_1,\ldots,g_m)$, define $\varepsilon(g)=\vert \det(g_m) \vert$, the absolute value of $\det(g_m)$, which is a power of $p$. This $k$-character is well defined since $p\neq l$.
\end{itemize}
\end{defn}

\begin{figure}
\centering
\begin{tikzpicture}
\draw[fill=gray] (0,1) rectangle (1,0); 
\draw[fill=gray] (1,0) rectangle (3,-2);
\draw[fill=gray] (3,-2) rectangle (5,-4);
\draw[fill=white] (1,-2) rectangle (3,-4) ;
\draw [fill=white] (3,0) rectangle (5,-2);
\draw[decorate,decoration={brace,amplitude=7pt}]  (1,-2)--(1,0);
\draw[decorate,decoration={brace,amplitude=7pt}]  (1,-4)--(1,-2);
\node at (0.3,-1){$n_{m-1}$};
\node at (0.3,-3){$n_m$};
\node at (2,-3){$0$};
\node at (4,-1){$U$};
\end{tikzpicture}
\caption{Case I}

\centering
\begin{tikzpicture}
\draw[fill=gray] (0,1) rectangle (1,0);
\draw[fill=gray] (1,0) rectangle (3,-2);
\draw[fill=white] (3,0) rectangle (5,-2);
\draw[fill=white] (1,-2) rectangle (3,-4);
\draw[fill=gray] (3,-2) rectangle (5,-3.5);
\draw[fill=white] (3,-3.5) rectangle (4.5,-4);
\draw[fill=gray] (4.5,-3.5) rectangle (5,-4);
\draw[decorate,decoration={brace,amplitude=7pt}]  (1,-2)--(1,0);
\draw[decorate,decoration={brace,amplitude=7pt}]  (5,-2)--(5,-3.5);
\draw[decorate,decoration={brace,amplitude=3pt}]  (5,-3.5)--(5,-4);
\node at (0.3,-1){$n_{m-1}$};
\node at (5.85,-2.75) {$n_m-1$};
\node at (5.25,-3.75) {$1$};
\node at (2,-3){$0$};
\node at (4,-1){$U$};
\node at (4.75,-3.75) {$1$};
\node at (3.75,-3.75) {$0$};
\end{tikzpicture}
\caption{Case II}
\end{figure}

\begin{figure}
\centering
\begin{tikzpicture}
\draw[fill=gray] (0,1) rectangle (1,0); 
\draw[fill=gray] (1,0) rectangle (2.5,-1.5);
\draw[fill=gray] (2.5,-1.5) rectangle (4.5,-3.5);
\draw[fill=gray] (4.5,-3.5) rectangle (5,-4);
\draw[fill=white] (1,-1.5) rectangle (2.5,-3.5);
\draw[fill=white] (2.5,0) rectangle (4.5,-1.5);
\draw[fill=gray] (4.5,0) rectangle (5,-1.5);
\draw[fill=white] (4.5,-1.5) rectangle (5,-3.5);
\draw[fill=white] (1,-3.5) rectangle (2.5,-4);
\draw[fill=white] (2.5,-3.5) rectangle (4.5,-4);
\draw[decorate,decoration={brace,amplitude=7pt}]  (1,-1.5)--(1,0);
\draw[decorate,decoration={brace,amplitude=7pt}]  (5,-1.5)--(5,-3.5);
\draw[decorate,decoration={brace,amplitude=3pt}]  (5,-3.5)--(5,-4);
\node at (0.01,-0.75) {$n_{m-1}-1$};
\node at (5.6,-2.5) {$n_m$};
\node at (5.25,-3.75) {$1$};
\node at (4.75,-3.75) {1};
\node at (3.5,-0.75) {$U$};
\node at (1.75,-2.5) {$0$};
\node at (1.75,-3.75) {$0$};
\node at (3.5,-3.75) {$0$};
\node at (4.75,-2.5) {$0$};
\end{tikzpicture}
\caption{Case III}
\end{figure}

\begin{prop}
\label{dprop 5}
Assume that $\rho_1\in\mathrm{Rep}_k(\rM_{n_1,\ldots,n_m}'), \rho_2\in\mathrm{Rep}_k(P_{(n_1,\ldots,n_m),m}')$, and $\rho_3\in\mathrm{Rep}_k(P_{(n_1,\ldots,n_m),m-1}')$. The functor $\ind_{m}^{m-1}$ is defined as in \ref{ddef 5} according to different cases, and we have the following properties:
\begin{enumerate}
\item 
In $\mathrm{Rep}_k(P_{(n_1,\ldots,n_{m-1}+n_m),m-1}')$, there exists an exact sequence:
$$0\rightarrow\ind_m^{m-1}(\rho_1\vert_{P_{m,m-1}'})\rightarrow(\ind_m^{m-1}\rho_1)\vert_{P_{m-1,m-1}'}\rightarrow \ind_m^{m-1}(\rho_1\vert_{P_{m,m}'})\rightarrow 0,$$
where $P_{m,m-1}'$ denotes $P_{(n_1,\ldots,n_m),m-1}'$, $P_{m-1,m-1}'$ denotes $P_{(n_1,\ldots,n_{m-1}+n_m),m-1}'$, and $P_{m,m}'$ denotes $P_{(n_1,\ldots,n_m),m}'$.
\item When $2\leq m$, let $\dot{\theta}$ be a non-degenerate character of $\rU_{n_1}\times\cdots\times\rU_{n_{m-2}}\times\rU_{n_{m-1}+n_m}$, such that $\dot{\theta}\vert_{U_{n_1}\times\cdots\times \rU_{n_m}}\cong\theta$. We have equivalences:
\begin{itemize}
\item $\ind_{m}^{m-1}\circ\Psi_m^{-}(\rho_2)\cong\Psi_{m-1}^{-}\circ\ind_{m}^{m-1}(\rho_2);$
\item $\ind_{m}^{m-1}\circ\Phi_{\theta,m}^{-}(\rho_2)\cong\Phi_{\dot{\theta},m-1}^{-}\circ\ind_{m}^{m-1}(\rho_2).$
\end{itemize}
\item We have an equivalence:
$$\Psi_{m-1}^{-}\circ\ind_{m}^{m-1}(\rho_3)\cong\ind_{m}^{m-1}\circ\Psi_{m-1}^{-}(\rho_3),$$
and an exact sequence:
$$0\rightarrow\ind_{m}^{m-1}\circ\Phi_{\theta,m-1}^{-}(\rho_3)\rightarrow\Phi_{\dot{\theta},m-1}^-\circ\ind_{m}^{m-1}(\rho_3)\rightarrow\ind_{m}^{m-1}((\Psi_{m-1}^{-}\rho_3)\vert_{P'})\rightarrow 0,$$
where $P'=P_{(n_1,\ldots,n_{m-1}-1,n_m),m}'$.
\end{enumerate}
\end{prop}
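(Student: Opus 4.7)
The plan is to prove the three parts in the order $(2)$, $(1)$, $(3)$, mirroring the strategy of Bernstein--Zelevinsky \cite{BeZe}~\S\S 1.8, 3 with the extra bookkeeping forced by the $\mathrm{SL}_n$-setting. The central idea is that after unwinding the definitions, every statement becomes a Mackey-type decomposition for compact induction from an open subgroup, followed by an identification of orbits.

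For part $(2)$, both equivalences follow from the definitions once one draws the subgroup diagram of Case~II. There the vector group $V_{n_m-1}$ sits as a normal subgroup of the source $P_{(n_1,\ldots,n_m),m}'$ and, crucially, lies inside the Levi part of the target $P_{(n_1,\ldots,n_{m-1}+n_m),m-1}'$, not inside the unipotent piece $U$ over which one is inducing. Consequently taking $V_{n_m-1}$-coinvariants commutes with the compact induction $i_{\rU,1}$, because the action of $V_{n_m-1}$ on the functions $f\colon \rU\to E$ is pointwise on the values. The same argument, with $\mathds{1}$ replaced by $\theta$ and the extra equivariance condition imposed, gives the $\Phi^-$-identity; the hypothesis $\dot\theta\vert_{\rU_{n_1}\times\cdots\times\rU_{n_m}}\cong\theta$ is precisely what makes the character descend compatibly on both sides.

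For part $(1)$, fix $H=P_{(n_1,\ldots,n_m),m-1}'$ embedded in $G=P_{(n_1,\ldots,n_{m-1}+n_m),m-1}'$ as in Case~III, and decompose $(\ind_m^{m-1}\rho_1)\vert_{G}$ by the double cosets of $G$ on the homogeneous space underlying the Case~I induction. That space carries exactly two $G$-orbits whose stabilizers meet the source in, respectively, $P_{(n_1,\ldots,n_m),m-1}'$ and $P_{(n_1,\ldots,n_m),m}'$ (embedded as in Case~II). The closed orbit yields the subrepresentation $\ind_m^{m-1}(\rho_1\vert_{P_{m,m-1}'})$ and the open orbit yields the quotient $\ind_m^{m-1}(\rho_1\vert_{P_{m,m}'})$. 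The twist by $\varepsilon=|\det(g_m)|$ built into $\ind_m^{m-1}$ in Case~III is precisely the factor needed to absorb the mismatch of modular characters arising when one passes from $\mathrm{GL}$ to $\mathrm{SL}$, thereby making both arrows $G$-equivariant.

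For part $(3)$, the first equivalence is proved exactly as in part $(2)$, using Case~III instead of Case~II. The short exact sequence is then obtained by applying the exact functor $\Phi_{\dot\theta,m-1}^-$ to a decomposition of $\ind_m^{m-1}(\rho_3)$ entirely analogous to that of part $(1)$, and then identifying the two end-pieces via the commutation relations already established. The quotient term $\ind_m^{m-1}((\Psi_{m-1}^-\rho_3)\vert_{P'})$ appears because, on the open orbit, the $\dot\theta$-equivariance condition forces the coinvariants to become $\mathds{1}$-trivial on the first $V_{n_{m-1}-1}$-factor while retaining a $\theta$-type condition on the remaining $V_{n_m-1}$-factor, which is exactly the structure of $P'=P_{(n_1,\ldots,n_{m-1}-1,n_m),m}'$. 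Exactness at both ends follows from Remark~\ref{drem 3}.

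The main obstacle is not conceptual but combinatorial: one must track the three embeddings of Cases~I, II, III simultaneously, confirm that the two $G$-orbits in the Mackey-type decomposition in part~$(1)$ correspond to the two named mirabolic-type subgroups, and verify that the character $\varepsilon$ is the precise modular twist needed in Case~III for all the arrows to become equivariant. Once this bookkeeping is pinned down, all the stated sequences follow mechanically from the exactness of $\Psi^\pm$, $\Phi^\pm_{\theta,\cdot}$ and $\ind_m^{m-1}$.
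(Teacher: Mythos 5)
Your strategy is at bottom the same as the paper's: both reduce each statement to an orbit analysis for a Mackey-type decomposition, following Bernstein--Zelevinsky \S\S 5.1--5.2, with a single orbit giving an isomorphism and two orbits giving a short exact sequence. You also correctly identify the role of the character $\varepsilon$ in Case~III. However, two of your justifications have real gaps.

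First, your argument for part~$(2)$ is not correct as written. You say the $V_{n_m-1}$-coinvariants commute with the induction ``because the action of $V_{n_m-1}$ on the functions $f\colon \rU\to E$ is pointwise on the values,'' but the coinvariants taken on the two sides are with respect to \emph{different} groups: on the left you take $V_{n_m-1}$-coinvariants of $\rho_2$ before inducing, while on the right you take $V_{n_{m-1}+n_m-1}$-coinvariants of the induced module, and $V_{n_{m-1}+n_m-1}$ strictly contains $V_{n_m-1}$ (it also involves the block coupling to the $n_{m-1}$-part). The fact that this larger coinvariant functor, applied after induction, matches the smaller one applied before induction is precisely the content of the geometric lemma in the one-orbit case; it does not follow from a pointwise observation. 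In the paper's proof one checks that $\mathrm{Q}=\rG$ forces a single orbit and that conditions $(1)$--$(4)$ and $(\ast)$ of \cite{BeZe}~\S 5.1 hold, which is the honest way to establish the equivalence.

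Second, your derivation of the exact sequence in part~$(3)$ is imprecise. You propose to apply $\Phi^-_{\dot\theta,m-1}$ to ``a decomposition of $\ind_m^{m-1}(\rho_3)$ entirely analogous to that of part~$(1)$,'' but $\rho_3$ lives in $\mathrm{Rep}_k(P_{(n_1,\ldots,n_m),m-1}')$ rather than $\mathrm{Rep}_k(\rM_{n_1,\ldots,n_m}')$, so the groups, the orbit structure, and the stabilizers differ from those of part~$(1)$; the sequence of part~$(1)$ cannot simply be hit with $\Phi^-$. The paper runs a fresh two-orbit computation, in which the closed orbit yields $\ind_m^{m-1}\circ\Phi^-_{\theta,m-1}$ (using $\varepsilon\circ\Phi^-_{\theta,m-1}\cong\Phi_{\theta,m-1}\circ\varepsilon$) and the open orbit yields $\ind_m^{m-1}\circ\res\circ\Psi^-_{m-1}$ after verifying that $\psi'$ is trivial, $\varepsilon_1$ is trivial, and $\varepsilon_2\cong\varepsilon^{-1}$. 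Those character computations are exactly where the $\mathrm{SL}$-specific difficulties live, and they need to be carried out, not inferred by analogy. In short: same skeleton as the paper, but you need to actually set up the \cite{BeZe}~5.1 data ($\rG,\rM,\rU,\rN,\rV$) and perform the orbit and character bookkeeping for each of the four functor identities separately.
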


\begin{proof}
As proved in the Appendix of \cite{C}, Theorem 5.2 in \cite{BeZe} holds for $k$-representations of $\rM'$. Now let $n=n_1+\ldots+n_m$.

For $(1)$: Let $\rM'=\rM_{n_1,\ldots,n_m}'$ be embedded into $\rG'=\rM_{n_1,\ldots,n_{m-1}+n_m}'$ as in definition \ref{ddef 5}, figure $\mathrm{I}$. Define functor $\mathrm{F}$ as $\mathrm{F}(\rho_1)=\rho_1\vert_{P_{(n_1,\ldots,n_{m-1}+n_m),m-1}'}$, where the functor $\mathrm{F}$ is equivalently defined as in \S 5.1 \cite{BeZe} under the following setting (the notation $\rM$ in \cite{BeZe} corresponds $\rM'$ here): 
$$\rU \text{ as in figure }\mathrm{I},\vartheta=1, \rP=\rM'\rU;$$
$$\rN=P_{(n_1,\ldots,n_{m-1}+n_m),m-1}',\rV=\{ e\},\mathrm{Q}=\rN\rV.$$
To compute $\mathrm{F}$, we apply theorem 5.2 \cite{BeZe}. Condition $(1),(2)$ and $(\ast)$ from 5.1 \cite{BeZe} hold trivially. Let $\rT$ be the group of diagonal matrix, the $\mathrm{Q}$-orbits on $X=\mathrm{P}\backslash \rG$ is actually the $\rT\rN$-orbits, and the group $\rT\rN$ is a parabolic subgroup. By Bruhat decomposition $\rT\rN$ has two orbits: the closed orbit $Z$ of point $\mathrm{P}\cdot e\in X$ and the open orbit $Y$ of the point $\mathrm{P}\cdot\omega^{-1}\in X$, where $\omega$ is the matrix of the cyclic permutation $sgn(\sigma)\mathds{1}_{n_m}\cdot\sigma$. Here $\sigma$ is a permutation
$$\sigma=(n_1+\cdots+n_{m-1}\rightarrow n\rightarrow n-1\rightarrow\cdots\rightarrow n_1+\cdots+n_{m-1}),$$
and $sgn(\sigma)$ denotes the signal of $\sigma$, and $sgn(\sigma)\mathds{1}_{n_m}$ denotes an element in $\rM_{n_1,\ldots,n_m}'$, which is equal to identity on the first $m-1$ blocs, and a scalar matrix with value of $sgn(\sigma)$ on the last bloc. Now we consider condition $(4)$ from 5.1 \cite{BeZe}:
\begin{itemize}
\item Since $V=\{ e\}$, it is clear that $\omega(\mathrm{P}),\omega(\rM')$ and $\omega(U)$ are decomposable with respect to $(\rN,\rV)$;
\item Let us consider $\omega^{-1}(\mathrm{Q})=\omega^{-1}(\rN)$.
\end{itemize}
To study the intersection $\omega^{-1}(\rN)\cap (\rM\cdot\rU)$, first we consider a Levi subgroup $\rM_{n_1,\ldots,n_{m-1}+n_m-1,1}'$ of $\rG'$ and the corresponding standard parabolic subgroup 
$$\mathrm{P}'=\rM_{n_1,\ldots,n_{m-1}+n_m-1,1}'\cdot \rV_{n_{m-1}+n_m-1},$$
where $\rV_{n_{m-1}+n_m-1}$ denotes the unipotent radical of $\mathrm{P}'$. We have $\mathrm{N}\subset \mathrm{P}'$, hence $\omega^{-1}(\mathrm{N})\subset\omega^{-1}(\mathrm{P}')$. As in 6.1 of \cite{BeZe}, after fix a system $\Omega$ of roots, and denote $\Omega^{+}$ the set of positive roots. Then by Proposition in 6.2 \cite{BeZe}, we can write $\omega^{-1}(\mathrm{P}')=\rG(\mathcal{S})$ and $\mathrm{P}=\rG(\mathcal{P}),\rU=\rU(\mathcal{M})$ in the manner as in 6.1\cite{BeZe}, where $\mathcal{S},\mathcal{P}$ and $\mathcal{M}$ are convex subset of $\Omega$. So by a same computation as in Proposition in 6.1 \cite{BeZe}, we have:
$$\omega^{-1}(\mathrm{P}')\cap\mathrm{P}=(\omega^{-1}(\mathrm{P}')\cap\rM')\cdot(\omega^{-1}(\mathrm{P}')\cap\rU).$$
Notice that $\omega^{-1}(\mathrm{P}')\cap\rU=\omega^{-1}(\rN)\cap\rU$, we deduce that:
$$\omega^{-1}(\rN)\cap\mathrm{P}=(\omega^{-1}(\rN)\cap\rM')\cdot(\omega^{-1}(\rN)\cap\rU).$$
In the formula of $\Phi_{Z}$ in 5.2 \cite{BeZe}, since $\rU\cap\omega^{-1}(\rN)=\rU$, the characters $\varepsilon_1=\varepsilon_2=1$. Hence we obtain the exact sequence desired.

For $(2)$. In this part, the functor $\ind_{m}^{m-1}$ was defined differently according to different cases of Definition \ref{ddef 5}. First we consider the first part concerning about $\Psi_m^-$. Define functor $\mathrm{F}$ as $\Psi_{m-1}^-\circ\ind_{m}^{m-1}$. We write $\mathrm{F}$ as in \S $5.1$ \cite{BeZe} in the situation:
$$\rG=P_{(n_1,\ldots,n_{m-1}+n_m),m-1}',\rM=P_{(n_1,\ldots,n_m),m}', \rU \text{ are defined in Definition } \ref{ddef 5} \text{ case } \mathrm{II},$$
$$\rN=\rM_{n_1,\ldots,n_{m-1}+n_m-1}',\mathrm{V}=\rV_{n_{m-1}+n_m-1}.$$
Condition $(1)$ and $(2)$ of \S 5.1 \cite{BeZe} is clear. Since $\mathrm{Q}=\rG$, and there is only one $\mathrm{Q}$-orbit on $X=\mathrm{P} \backslash\rG$, conditions $(3),(4)$ hold trivially. Thus we obtain the equivalence:
$$\ind_{m}^{m-1}\circ\Psi_m^{-}\rho_2\cong\Psi_{m-1}^{-}\circ\ind_{m}^{m-1}\rho_2.$$

For the second part concerning about $\Phi_{\dot{\theta},m-1}^-$. Define functor $\mathrm{F}$ as $\Phi_{\dot{\theta},m-1}^-\circ\ind_{m}^{m-1}$. We write $\mathrm{F}$ as in \S $5.1$ \cite{BeZe} in the situation:
$$\rG=P_{(n_1,\ldots,n_{m-1}+n_m),m-1}',$$
$$\rN=P_{(n_1,\ldots,n_{m-1}+n_m-1),m-1}',\mathrm{V}=\mathrm{V}_{n_{m-1}+n_m-1},$$
and $\rM,\rU$ are defined as the case $\mathrm{II}$ of \ref{ddef 5}. Same orbits and same computation as in Proposition 4.13 $\mathrm{(b)}$ of \cite{BeZe} can be applied here.


For part $(3)$. Define functor $\mathrm{F}=\Psi_{m-1}^-\circ\ind_m^{m-1}$, we have (in the manner of \S $5.1$ \cite{BeZe}):
$$\rG=P_{(n_1,\ldots,n_m),m}',$$
$$\rN=\rM_{n_1,\ldots,n_m-1}',\mathrm{V}=\mathrm{V}_{n_m-1},$$
and $\rM,\rU$ as in \ref{ddef 5} case $\mathrm{II}$. There is only one $\mathrm{Q}$-orbit on $\mathrm{P}\backslash\rG$, and condition $(1)-(4)$ and $(\ast)$ in \S $5.1$ \cite{BeZe} hold. Notice that $\varepsilon\circ\Psi_{m-1}^-\cong\Psi_{m-1}^-$ ($\varepsilon$ is defined in Definition \ref{ddef 5}). After applying theorem $5.2$ of \cite{BeZe}, we obtain the equivalence:
$$\Psi_{m-1}^-\circ\ind_{m}^{m-1}\rho_3\cong\ind_{m}^{m-1}\circ\Psi_{m-1}^-\rho_3.$$

Define functor $\mathrm{F}=\Phi_{\dot{\theta},m-1}^-\circ\ind_m^{m-1}$. We have (in the manner of \S $5.1$ \cite{BeZe}):
$$\rG=P_{(n_1,\ldots,n_m),m}',$$
$$\rN=P_{(n_1,\ldots,n_{m-1}+n_m-1),m-1}',\mathrm{V}=\mathrm{V}_{n_{m-1}+n_m-1},$$
and $\rM,\rU$ are defined as the case $\mathrm{II}$ of Definition \ref{ddef 5}. As in the proof of part $(2)$, the group $\mathrm{Q}$ has two orbits on $\mathrm{P}\backslash\rG$: the closed one $\mathrm{P}\cdot e$ and the open one $\mathrm{P}^{-1}\cdot \omega_0$. The condition $(4)$ can be justified as part $(2)$, and condition $(\ast)$ is clear since $\omega_0(\rU)\cap\mathrm{V}=\mathds{1}$. Now we apply theorem $5.2$ \cite{BeZe}. The functor corresponds to the orbit $\mathrm{P}\cdot e$ is $\ind_{m}^{m-1}\circ\Phi_{\theta,m-1}^-$ by noticing $\varepsilon\circ\Phi_{\theta,m-1}^-\cong\Phi_{\theta,m-1}\circ\varepsilon$. Now we consider the functor corresponds to the orbit $\mathrm{P}\cdot\omega_0^{-1}$. Following the notation as \S $5.1$ \cite{BeZe}, the character $\psi'=\omega_0^{-1}(\psi)\vert_{\rM\cap\omega_0^{-1}(\mathrm{V})}$ is trivial. The character $\varepsilon_1$ is trivial, and $\varepsilon_2\cong\varepsilon^{-1}$. Hence the functor corresponded to this orbit is
$$\ind_{m}^{m-1}\circ\res_{P'}^{\rM_{n_1,\ldots,n_{m-1},n_m}'}\circ\Psi_{m-1}^-,$$
from which we deduce the exact sequence desired.
\end{proof}

\begin{cor}
\label{dcor 6}
The functor $\ind_m^{m-1}$ is defined respectively to the corresponding cases as in Definition \ref{ddef 5}.
\begin{enumerate}
\item Let $\rho\in\mathrm{Rep}_k(P_{(n_1,\ldots,n_m),m}')$, and $\theta,\dot{\theta}$ be as in Proposition \ref{dprop 5} part $(2)$. Assume that $1\leq i\leq n_m$, then we have an equivalence about taking $i$-th derivative
$$(\ind_{m}^{m-1}\rho)_{\dot{\theta}, m-1}^{(i)}\cong\ind_{m}^{m-1}\rho_{\theta,m}^{(i)};$$
\item Let $\rho\in\mathrm{Rep}_k(P_{(n_1,\ldots,n_m),m-1}')$. Assume that $1\leq i\leq n_{m-1}+n_m$, then $(\ind_{m}^{m-1}\rho)_{\dot{\theta},m-1}^{(i)}$ is filtrated by
$\ind_{m}^{m-1}((\rho_{\theta,m}^{(i-j)})_{\theta,m-1}^{(j)})$, where $[1,i-n_m]\leq j\leq i$ and $[1,i-n_m]$ denotes the bigger integer;
\item Let $\rho\in\mathrm{Rep}_k(\rM_{n_1,\ldots,n_m}')$. Assume that $i\geq 0$, then $(\ind_{m}^{m-1}\rho)_{\dot{\theta},m-1}^{(i)}$ is filtrated by $\ind_{m}^{m-1}((\rho_{\theta,m}^{(i-j)})_{\theta,m-1}^{(j)})$, where $[1,i-n_m]\leq j\leq i$;
\item Let $\rho\in\mathrm{Rep}_k(\rM_{n_1,\ldots,n_m}')$, there is an equivalence:
$$(\ind_2^1\circ\cdots\circ\ind_{m-1}^{m-2}\circ\ind_{m}^{m-1}\rho)_{\dot{\theta},1}^{(n_1+\cdots+n_m)}\cong(\cdots((\rho_{\theta,m}^{(n_m)})_{\theta,m-1}^{(n_{m-1})})\cdots)_{\theta,1}^{(n_1)}.$$
\end{enumerate}
\end{cor}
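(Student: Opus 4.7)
The plan is to deduce each of the four parts from Proposition~\ref{dprop 5} by iterating its commutation isomorphisms and short exact sequences, using throughout that $\Psi_s^{\pm}$, $\Phi_{\theta,s}^{\pm}$, and hence each derivative functor $(\cdot)_{\theta,s}^{(d)}$, are exact (Remark~\ref{drem 3}), and that derivatives at distinct blocks $s\ne s'$ commute because the corresponding unipotent groups $V_{n_s-1}$ and $V_{n_{s'}-1}$ centralise each other. In every appearance $\ind_m^{m-1}$ denotes the appropriate variant of Definition~\ref{ddef 5} dictated by the source category of its argument.

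Part~(1) is a pure iterated commutation. For $1\leq i\leq n_m$, Definition~\ref{ddef 4} expands
$(\ind_m^{m-1}\rho)_{\dot{\theta},m-1}^{(i)}=\Psi_{m-1}^{-}\circ(\Phi_{\dot{\theta},m-1}^{-})^{i-1}(\ind_m^{m-1}\rho)$,
and each of the two equivalences of Proposition~\ref{dprop 5}(2) pushes $\Psi_{m-1}^{-}$ and $\Phi_{\dot{\theta},m-1}^{-}$ through $\ind_m^{m-1}$ to become $\Psi_m^{-}$ and $\Phi_{\theta,m}^{-}$; iterating $i-1$ times yields $\ind_m^{m-1}\rho_{\theta,m}^{(i)}$.

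Part~(2) comes from iterating the short exact sequence of Proposition~\ref{dprop 5}(3) to compute $(\Phi_{\dot{\theta},m-1}^{-})^{i-1}(\ind_m^{m-1}\rho)$. Each application splits an occurrence of $\Phi_{\dot{\theta},m-1}^{-}\ind_m^{m-1}$ into a ``stay'' sub-piece $\ind_m^{m-1}\Phi_{\theta,m-1}^{-}$ and a ``transition'' quotient $\ind_m^{m-1}((\Psi_{m-1}^{-}\cdot)\vert_{P'})$. After a transition the inner representation lives in $\mathrm{Rep}_k(P_{(\ldots),m}')$, so any further $\Phi_{\dot{\theta},m-1}^{-}$ commutes past $\ind_m^{m-1}$ as $\Phi_{\theta,m}^{-}$ by Proposition~\ref{dprop 5}(2), and that branch no longer splits. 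Indexing graded pieces by the total number $j$ of derivatives that ultimately land on block $m-1$ and applying the outermost $\Psi_{m-1}^{-}$ (which commutes through as $\Psi_{m-1}^{-}$ or $\Psi_m^{-}$ depending on the source category) produces a filtration whose graded piece, after using the commutativity of derivatives at distinct blocks to reorder, is $\ind_m^{m-1}((\rho_{\theta,m}^{(i-j)})_{\theta,m-1}^{(j)})$. The bound $j\geq \max(1,i-n_m)$ appears because the final $\Psi_{m-1}^{-}$ contributes at least one $(m-1)$-derivative and because $\rho_{\theta,m}^{(i-j)}$ vanishes as soon as $i-j>n_m$. Part~(3) then reduces to part~(2): by Proposition~\ref{dprop 5}(1), $(\ind_m^{m-1}\rho)\vert_{P_{m-1,m-1}'}$ is an extension of $\ind_m^{m-1}(\rho\vert_{P_{m,m}'})$ by $\ind_m^{m-1}(\rho\vert_{P_{m,m-1}'})$; applying the exact functor $(\cdot)_{\dot{\theta},m-1}^{(i)}$ and then using parts~(1) and~(2) to compute the derivative of each outer term concatenates two filtrations into the one claimed, over the same range of $j$.

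For part~(4), I apply part~(3) inductively along the chain $\ind_2^1\circ\cdots\circ\ind_m^{m-1}$. At the maximal total degree $n_1+\cdots+n_m$, the combined constraints $j\geq i-n_m$ and $j\leq n_{m-1}$ (the latter because the $(m-1)$-derivative must not exceed the remaining block size) force $j=n_{m-1}$ and $i-j=n_m$ at each stage, so every filtration collapses to a single graded piece in which each block is differentiated to its maximum degree; chaining these collapses gives the iterated maximal derivative. The main obstacle is the combinatorial bookkeeping inside part~(2): one must verify that every graded piece of the unwound filtration can be rewritten in the canonical form $\ind_m^{m-1}((\rho_{\theta,m}^{(i-j)})_{\theta,m-1}^{(j)})$ and that the range of $j$ is sharp. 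The argument parallels Lemma~4.5 of \cite{BeZe}, but must be transcribed into the more intricate $\mathrm{SL}_n$-mirabolic notation of Section~\ref{subsubsection 5.1.1}, where the target Levi admits no product decomposition into $\mathrm{SL}$-factors and recursion on $\mathrm{SL}_k$ is unavailable.
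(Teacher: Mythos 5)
Your proposal is essentially the paper's own argument, spelled out in more detail: part~(1) by iterating the two commutation isomorphisms of Proposition~\ref{dprop 5}(2) through the exact functors $\Psi^{-}$ and $\Phi^{-}$; part~(2) by iterating the short exact sequence of Proposition~\ref{dprop 5}(3); part~(3) by applying the exact derivative to the short exact sequence of Proposition~\ref{dprop 5}(1) and invoking parts~(1)--(2) on the two outer terms; and part~(4) by iterating part~(3), observing that at the maximal total degree the filtration collapses to a single graded piece, combined with the identification $\ind_2^1\circ\cdots\circ\ind_m^{m-1}\rho\cong i_{\rM'}^{\rG'}\rho$ via transitivity of parabolic induction. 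The extra bookkeeping you supply for the collapse in part~(4) --- the constraints $j\geq i-n_m$ together with vanishing of out-of-range derivatives pinning down $j$ --- is correct and is exactly what the paper leaves implicit.
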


\begin{proof}
Part $(1)$ follows from the exactness of $\Phi_{\theta,m}^-,\Psi_m^-$ and \ref{dprop 5} $(2)$; $(2)$ from $(1)$ and \ref{dprop 5} $(3)$, $(3)$ from $(1),(2)$ and \ref{dprop 5} $(1)$. Part $(4)$ follows from $(3)$, by noticing that
$$\ind_2^1\circ\cdots\circ\ind_{m-1}^{m-2}\circ\ind_{m}^{m-1}\rho\cong i_{\rM_{n_1,\ldots,n_m}}^{\mathrm{GL}_{n_1+\cdots+n_m}}\rho.$$
In fact, this is the transitivity of parabolic induction.
\end{proof}

\subsection{Uniqueness of supercuspidal support}
We take the same notations as in Section \ref{subsubsection 5.1.1}.
\begin{prop}
\label{Uprop 1}
Let $\tau\in\mathrm{Rep}_{k}(\rM_{n_1,\ldots,n_m}')$, and $\theta$ a non-degenerate character of $\rU_{n_1,\ldots,n_m}$. Then $\tau_{\theta,m}^{(n_1+\ldots+n_m)}\neq 0$ is equivalent to say that $\Hom_{k[\rU_{n_1,\ldots,n_m}]}(\tau,\theta)\neq 0$. In particular, this is equivalent to say that $(\rU_{n_1,\ldots,n_m},\theta)$-coinvariants of $\tau$ is non-trivial.
\end{prop}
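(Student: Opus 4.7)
The plan is to identify the top derivative $\tau_{\theta,m}^{(n_1+\ldots+n_m)}$, as a $k$-vector space, with the space $\tau_{\rU,\theta}$ of $(\rU_{n_1,\ldots,n_m},\theta)$-coinvariants of $\tau$ (writing $\rU = \rU_{n_1,\ldots,n_m}$). Granted this, the equivalence with $\Hom_{k[\rU]}(\tau,\theta) \neq 0$ is immediate from the elementary duality $\Hom_{k[H]}(V,\chi) \cong (V_{H,\chi})^{\ast}$: a linear map $V \to \chi$ intertwining $H$ is exactly a linear form on $V$ vanishing on the subspace spanned by $\rho(h)v - \chi(h)v$, which is the kernel of the surjection $V \twoheadrightarrow V_{H,\chi}$, and over a field a vector space is zero if and only if its dual is.

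Unfolding Definition \ref{ddef 4} at $d = n_1+\ldots+n_m$, $s = m$ (so $l = m-2$ and $n' = n_1$) gives
$$\tau_{\theta,m}^{(n_1+\ldots+n_m)} = \Psi_1^- \circ (\Phi_{\theta,1}^-)^{n_1-1} \circ (\Phi_{\theta,2}^-)^{n_2-1} \circ \cdots \circ (\Phi_{\theta,m}^-)^{n_m-1}\bigl(\tau\vert_{P_{(n_1,\ldots,n_m),m}'}\bigr).$$
On the underlying $k$-vector space, each $\Phi_{\theta,s}^-$ is the functor of $(V_{n_s-1},\theta\vert_{V_{n_s-1}})$-coinvariants (the subsequent restriction to a mirabolic being the identity on the underlying space), and $\Psi_1^-$ quotients by the trivial group $V_0$ and is also the identity on the vector space.

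The key combinatorial observation is that, for fixed $s$, the successive subgroups $V_{n_s-1}, V_{n_s-2}, \ldots, V_1$ quotiented out inside $(\Phi_{\theta,s}^-)^{n_s-1}$ are the successive columns strictly above the diagonal of the $s$-th block; they are mutually commuting and their product is exactly $\rU_{n_s}$. Across distinct blocks $s$, the unipotent subgroups $\rU_{n_s}$ lie in disjoint diagonal blocks of $\rM_{n_1,\ldots,n_m}'$ and therefore commute. Iterating the twisted coinvariants functor along this family computes, on the underlying vector space, the $(\rU_{n_1,\ldots,n_m},\theta)$-coinvariants, which yields the desired identification $\tau_{\theta,m}^{(n_1+\ldots+n_m)} \cong \tau_{\rU,\theta}$.

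The main technical point to handle carefully is the bookkeeping at the boundary between two consecutive blocks $s$ and $s-1$: one needs to check that the implicit restriction of the output of $(\Phi_{\theta,s}^-)^{n_s-1}$, namely a representation of $\rM_{n_1,\ldots,1,\ldots,n_m}'$ with the $1$ in the $s$-th slot, to the mirabolic at position $s-1$ does not introduce further coinvariants (it does not, since restriction acts as the identity on underlying vector spaces), and that the fixed non-degenerate character $\theta$ of $\rU_{n_1}\times\cdots\times\rU_{n_m}$ restricts on each column subgroup $V_k$ precisely to the character used by the corresponding $\Phi_{\theta,\cdot}^-$ (which is immediate from the defining property of $\theta$).
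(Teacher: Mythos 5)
Your proposal follows the same route as the paper: unfold the top derivative as the iterated composite $\Psi_1^-\circ(\Phi_{\theta,1}^-)^{n_1-1}\circ\cdots\circ(\Phi_{\theta,m}^-)^{n_m-1}$, observe that each $\Phi^-$ is on the underlying vector space a twisted-coinvariants quotient, deduce that the total kernel is the span of $g(w)-\theta(g)w$ for $g$ ranging over $\rU=\rU_{n_1,\ldots,n_m}$, and finish the $\Hom$ equivalence by the elementary duality $\Hom_{k[\rU]}(\tau,\theta)\cong(\tau_{\rU,\theta})^{\ast}$. That is exactly the paper's proof.

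One remark, though: the assertion that the successive column subgroups $V_{n_s-1},V_{n_s-2},\ldots,V_1$ inside a single block $\rU_{n_s}$ are ``mutually commuting'' is false. Already in $\rU_3$, the last-column group $\{1+xe_{13}+ye_{23}\}$ and the group $\{1+ze_{12}\}$ fail to commute, since $e_{12}e_{23}=e_{13}\neq 0$. Fortunately nothing in your argument (nor in the paper's) actually relies on commutativity. What is needed, and what the paper says, is only that (i) at each stage the subgroup being quotiented is normalized by the group acting on the previous quotient, so the iterated construction is well posed, and (ii) the product of the $V_j$ over all stages is $\rU$ and $\theta$ is a character, so that by a telescoping identity the total subspace spanned by $g_j w-\theta(g_j)w$ over all stages equals the subspace spanned by $gw-\theta(g)w$, $g\in\rU$. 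Your phrase ``their product is exactly $\rU_{n_s}$'' already contains the correct ingredient; you should delete the commutativity claim and replace it with that generation/telescoping justification.
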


\begin{proof}
In this proof, we use $\rU$ to denote $\rU_{n_1,\ldots,n_m}$. For the first equivalence, notice that $\Phi_{\theta,m}^-(\tau)\neq 0$ is equivalent to say that $(V_{n_m-1},\theta)$-coinvariants of $\tau$ is non-trivial. For $1\leq s\leq n_m-1$, let $V_{s}$ denote the subgroup of $\rU$ consisting with the matrices with non-zero coefficients only on the $(s+1)$-th line and the diagonal. Let $W$ denote the representation space of $\tau$. The space of $\tau_{\theta,m}^{(n_1+\cdots+n_m)}$ is isomorphic to the quotient of $W$ by the subspace $W_{\theta}$ generated by $g_s(w)-\theta(g_s)w$, for every $s$ and $g_s\in V_{s}$, $w\in W$. Meanwhile, since the subgroups $V_{s}$'s generate $\rU$, and $\theta$ is determined by $\theta\vert_{V_{s}}$ while considering every $s$, the subspace $W_{\theta}$ of $W$ is isomorphic to the subspace generated by $g(w)-\theta(g)w$, where $g\in\rU$. Hence $\tau_{\theta,m}^{(n_1+\cdots+n_m)}\neq 0$ is equivalent to say that $(\rU_{n_1,\ldots,n_m},\theta)$-coinvariants of $\tau$ is non-trivial. The second equivalence is clear, since the $(\rU,\theta)$-coinvariants of $\tau$ is the largest quotient of $\tau$ such that $\rU$ acts as a multiple of $\theta$. 
\end{proof}

\begin{prop}
\label{Uprop 2}
Let $\tau\in\mathrm{Rep}_{k}(\rM_{n_1,\ldots,n_m}')$, and $\rho$ be a subquotient of $\tau$. Let $\theta$ be a non-degenerate character of $\rU_{n_1,\ldots,n_m}$, and $\rho_{\theta,m}^{(n_1+\cdots+n_m)}$ is non-trivial, then $\tau_{\theta,m}^{(n_1+\cdots+n_m)}$ is non-trivial.
\end{prop}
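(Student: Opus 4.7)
The plan is to deduce the result immediately from the exactness of the derivative functor. Observe that by Definition \ref{ddef 4} the highest derivative $\cdot_{\theta,m}^{(n_1+\cdots+n_m)}$ is built as a composition of the functors $\Psi_s^{-}$ and $\Phi_{\theta,s}^{-}$, all of which are exact by Remark \ref{drem 3} (this exactness rests on the fact that each $V_s$ is a union of pro-$p$ open compact subgroups and $\ell\neq p$). Hence $\cdot_{\theta,m}^{(n_1+\cdots+n_m)}$ is an exact functor from $\mathrm{Rep}_k(\rM_{n_1,\ldots,n_m}')$ to $k$-vector spaces (equivalently, by Proposition \ref{Uprop 1}, it computes the $(\rU_{n_1,\ldots,n_m},\theta)$-coinvariants, and this coinvariants functor is exact under the same hypotheses).

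Next, I would unfold the definition of subquotient: there exist subrepresentations $\tau_1\subseteq\tau_0\subseteq\tau$ such that $\rho\cong\tau_0/\tau_1$. Applying the exact functor $\cdot_{\theta,m}^{(n_1+\cdots+n_m)}$ to the short exact sequence $0\to\tau_1\to\tau_0\to\rho\to 0$ yields a surjection
\[
(\tau_0)_{\theta,m}^{(n_1+\cdots+n_m)}\twoheadrightarrow\rho_{\theta,m}^{(n_1+\cdots+n_m)},
\]
which by hypothesis has non-zero target, so $(\tau_0)_{\theta,m}^{(n_1+\cdots+n_m)}\neq 0$. Applying the same functor to $0\to\tau_0\to\tau\to\tau/\tau_0\to 0$ then produces an injection
\[
(\tau_0)_{\theta,m}^{(n_1+\cdots+n_m)}\hookrightarrow\tau_{\theta,m}^{(n_1+\cdots+n_m)},
\]
so $\tau_{\theta,m}^{(n_1+\cdots+n_m)}$ is non-zero, which is the desired conclusion.

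There is no real obstacle here once the exactness is in hand; the only point that might require a sentence of justification is the exactness of the full composition defining the highest derivative, which follows from Remark \ref{drem 3} combined with Definition \ref{ddef 4}. If one prefers the coinvariants viewpoint, Proposition \ref{Uprop 1} identifies $\tau_{\theta,m}^{(n_1+\cdots+n_m)}$ with the $(\rU_{n_1,\ldots,n_m},\theta)$-coinvariants, and exactness of this coinvariants functor can be seen directly by averaging against $\theta$ over the pro-$p$ compact open subgroups exhausting $\rU_{n_1,\ldots,n_m}$ (which produces idempotents in the Hecke algebra because $\ell\neq p$). Either approach reduces the proposition to a routine diagram chase.
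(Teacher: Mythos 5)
Your argument is correct and follows exactly the same route as the paper: both identify the highest derivative as a composition of the exact functors $\Psi^{-}_{\cdot}$ and $\Phi^{-}_{\theta,\cdot}$ (via Definition \ref{ddef 4} and Remark \ref{drem 3}) and then conclude from exactness. The paper states the final step more tersely than you do; your explicit unfolding of the subquotient into two short exact sequences is a welcome clarification but not a different method.
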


\begin{proof}
We consider the $(n_1+\cdots+n_m)$-th derivative functor corresponding to the non-degenerate character $\theta$, from the category $\mathrm{Rep}_{k}(\rM_{n_1,\cdots,n_m}')$ to the category of $k$-vector spaces, which maps $\tau$ to $\tau_{\theta,m}^{(n_1+\cdots+n_m)}$. By Definition \ref{ddef 4} and Remark \ref{drem 3}, this functor is a composition of functors $\Psi_{\cdot}^{-}$ and $\Phi_{\theta,\cdot}^{-}$, hence is exact, from which we deduce that $\tau_{\theta,m}^{(n_1+\cdots+n_m)}$ is non-trivial.
\end{proof}

\begin{thm}
\label{Uthm 3}
Let $\rM'$ be a Levi subgroup of $\rG'$, and $\rho$ an irreducible $k$-representation of $\rM'$. The supercuspidal support of $\rho$ is a $\rM'$-conjugacy class of one unique supercuspidal pair.
\end{thm}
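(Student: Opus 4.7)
The plan is to reduce to the cuspidal case, apply Proposition \ref{Lproposition 10} to confine the supercuspidal support to a single $\rM$-orbit of candidate pairs $(\rL',\tau_i)$, and then use the uniqueness of the Whittaker model of the ambient cuspidal $\pi\in\mathrm{Rep}_k(\rM)$ together with the derivative calculus of Section \ref{subsubsection 5.1.1} to isolate exactly one $\rM'$-orbit.

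First, by the argument recalled in the paragraph ``Reduction to cuspidal cases'' above, applied with $\rM'$ in place of $\rG$, the cuspidal support of $\rho$ is a well-defined $\rM'$-conjugacy class $(\rL_0',\nu)$, and every supercuspidal pair in the supercuspidal support of $\rho$ is $\rM'$-conjugate to one in the supercuspidal support of $\nu$. Hence we may assume $\rho=\pi'$ is cuspidal. Choose an irreducible cuspidal $\pi\in\mathrm{Rep}_k(\rM)$ whose restriction contains $\pi'$, let $[\rL,\tau]$ be its supercuspidal support, and decompose $\res_{\rL'}^{\rL}\tau=\bigoplus_{i\in I}\tau_i$. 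By Proposition \ref{Lproposition 10}, every supercuspidal pair in the supercuspidal support of $\pi'$ is $\rM$-conjugate to some $(\rL',\tau_i)$; the task is to refine this to a single $\rM'$-orbit.

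Fix a non-degenerate character $\theta$ of $\rU_{\rM}$. Since $\pi$ is cuspidal on a product of general linear groups, it admits a unique $\theta$-Whittaker functional $\lambda$. Writing $\res_{\rM'}^{\rM}\pi=\bigoplus_{j}\pi'_j$ as a finite direct sum of $\rM$-conjugates of $\pi'$, the restriction of $\lambda$ is nonzero on at least one summand; since the theorem for an $\rM$-conjugate of $\pi'$ is equivalent to the theorem for $\pi'$ itself, we may assume $\lambda|_{\pi'}\neq 0$. Then $\pi'^{(n)}_{\theta,m}\neq 0$ by Proposition \ref{Uprop 1}, where $n$ is the total rank. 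If $(\rL',\tau_i)$ belongs to the supercuspidal support of $\pi'$, so that $\pi'$ is a subquotient of $i_{\rL'}^{\rM'}\tau_i$, then Proposition \ref{Uprop 2} yields $(i_{\rL'}^{\rM'}\tau_i)^{(n)}_{\theta,m}\neq 0$; Corollary \ref{dcor 6}(4) identifies this with the iterated block-highest-derivative of $\tau_i$, which by Proposition \ref{Uprop 1} is equivalent to $\tau_i$ admitting a $\theta|_{\rU_{\rL}}$-Whittaker functional.

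The unique $\theta|_{\rU_{\rL}}$-Whittaker functional on $\tau$ can restrict nontrivially to at most one summand $\tau_{i_0}$: otherwise, the zero-extensions of two distinct nonzero restrictions would produce two linearly independent Whittaker functionals on $\tau$, contradicting uniqueness. Hence $(\rL',\tau_i)$ can occur in the supercuspidal support of $\pi'$ only for $i=i_0$, giving a single $\rM'$-orbit and completing the proof. The main obstacle in this plan is the application of Corollary \ref{dcor 6}(4) to the parabolic induction $i_{\rL'}^{\rM'}$ internal to the nested Levi structure $\rL'\subset\rM'\subset\mathrm{SL}_n(F)$: since $\rM'$ is not itself a product of smaller special linear groups, one must unpack the iterated derivative block by block in this setting rather than appealing directly to the $\mathrm{GL}_n$ framework; once this compatibility is in place, the Whittaker-uniqueness step closes the proof quickly.
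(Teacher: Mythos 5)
Your proposal is correct and matches the paper's argument step for step: reduce to cuspidal $\rho=\pi'$, use Proposition~\ref{Lproposition 10} to confine the supercuspidal support to the $\rM'$-orbits of the $(\rL',\tau_i)$, choose $\theta$ (equivalently, replace $\pi'$ by an $\rM$-conjugate) so that the top derivative of $\pi'$ is nonzero, push this through the derivative/induction compatibility of Section~\ref{subsubsection 5.1.1}, and conclude from the one-dimensionality of the Whittaker space of $\tau$ that exactly one $\tau_{i_0}$ survives. The only difference is cosmetic: you phrase the final step via uniqueness of the Whittaker functional and a zero-extension argument, while the paper uses the dual language of the $(\rU_{\rL'},\theta)$-coinvariants of $\tau$ decomposing as a direct sum across the $\tau_i$; the technical point you flag about applying Corollary~\ref{dcor 6}(4) to the intermediate induction $i_{\rL'}^{\rM'}$ is present in the paper's own proof as well.
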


\begin{proof}
Since the cuspidal support of irreducible $k$-representation is unique, to prove the uniqueness of supercuspidal support, it is sufficient to assume that $\rho$ is cuspidal. Let $\pi$ be an irreducible cuspidal $k$-representation of $\rM$, such that $\rho$ is a sub-representation of $\res_{\rM'}^{\rM}\pi$. Let $(\rL,\tau)$ be a supercuspidal pair of $\rM$, and $[\rL,\tau]$ consists the supercuspidal support of $\pi$.  We have $\res_{\rL'}^{\rL}\tau\cong\oplus_{i\in I}\tau_i$, where $I$ is a finite index set. According to Proposition \ref{Lproposition 10}, the supercuspidal support of $(\rM',\rho)$ is contained in the union with respect to $i\in I$ of $\rM'$-conjugacy class of $(\rL',\tau_i)$. To finish the proof of our theorem, it remains to prove that there exists one unique $i_0\in I$ such that $(\rL',\tau_{i_0})$ is contained in the supercuspidal support of $(\rM',\rho)$.

After conjugation by $\rG'$, we could assume that $\rM'=\rM_{n_1,\ldots,n_m}'$ and $\rL'=\rM'_{k_1,\ldots,k_l}$ for a familly of integers $m,l, n_1,\ldots,n_m,k_1,\ldots,k_l\in\mathbb{N}^{\ast}$. There exists a non-degenerate character $\theta$ of $\rU=\rU_{n_1,\ldots,n_m}$, such that $\rho_{\theta,m}^{(n_1+\cdots+n_m)}\neq 0$. In fact, let $\theta$ be any non-degenerate character of $\rU$ and write $\res_{\rM'}^{\rM}\pi\cong\oplus_{s\in S}\pi_s$, where $S$ is a finite index set. We have:
$$\pi^{(n_1+...+n_m)}\cong(\pi\vert_{\rM'})_{\theta,m}^{(n_1+\cdots+n_m)}\cong\oplus_{s\in S}(\pi_s)_{\theta,m}^{(n_1+\cdots+n_m)},$$
where $\pi^{(n_1+\cdots+n_m)}$ indicates the $(n_1+...+n_m)$-th derivative of $\pi$. As in Section [\S III, 5.10, 3)] of \cite{V1}, we have $\mathrm{dim}(\rho^{(n_1+\cdots+n_m)})=1$, hence there exists one element $s_0\in S$ such that $(\pi_{s_0})_{\theta,m}^{(n_1+\cdots+n_m)}\neq 0$. Notice that $\rho$ are isomorphic to some $\pi_s$, which means there exists a diagonal element $t\in\rM$, such that the $t$-conjugation $t(\pi_{s_0})\cong\rho$. The character $t(\theta)$ is also non-degenerate of $\rU$, and we have $(t(\pi_{s_0}))_{t(\theta),m}^{(n_1+\cdots+n_m)}\cong (\pi_{s_0})_{\theta,m}^{(n_1+\cdots+n_m)}$ as $k$-vector spaces. We conclude that $\mathrm{dim}\rho_{t(\theta),m}^{(n_1+\cdots+n_m)}=1$. To simplify the notations, we assume $t=1$.

If $\rho$ is a subquotient of $i_{\rL'}^{\rM'}\tau_i$ for some $i\in I$. By (4) of Corollary \ref{dcor 6} and Proposition \ref{Uprop 2}, the derivative $\tau_{i_{\theta,m}}^{(n_1+\cdots+n_m)}\neq 0$. By section [\S III, 5.10, 3)] of \cite{V1}, the derivative $\tau_{\theta,m}^{(n_1+\cdots+n_l)}=1$, which means the dimension of $(\rU_{\rL'},\theta)$-coinvariants of $\tau$ is $1$ (by \ref{Uprop 1}). Notice that the $(\rU_{\rL'},\theta)$-coinvariants of $\tau$ is the direct sum of $(\rU_{\rL'},\theta)$-coinvariants of $\tau_i$ for every $i\in I$, which implies that there exists one unique $i_0\in I$ whose $(\rU_{\rL'},\theta)$-coinvariants is non-zero with dimension $1$. By Proposition \ref{Uprop 1} and Proposition \ref{Uprop 2}, this is equivalent to say that there exists one unique $i_0\in I$, such that the derivative $\tau_{i_{\theta,m}}^{(n_1+\cdots+n_m)}\neq 0$.
\end{proof}

\pagestyle{empty}


\end{document}